\documentclass{article}

\usepackage[notref,notcite,color]{showkeys}

\usepackage[centertags]{amsmath}
\usepackage{amsfonts}
\usepackage{amssymb}
\usepackage{amsthm}
\usepackage{newlfont}
\usepackage{mathtools}
\usepackage[top=1in, bottom=1in, left=1in, right=1in]{geometry}
\usepackage{tikz}

% COUNTERS -----------------------------------------------------
%

% THEOREMS -----------------------------------------------------
%
\theoremstyle{plain}
\newtheorem{theorem}{Theorem}[section]
\newtheorem{proposition}[theorem]{Proposition}
\newtheorem{corollary}[theorem]{Corollary}
\newtheorem{lemma}[theorem]{Lemma}

\theoremstyle{definition}
\newtheorem{definition}[theorem]{Definition}

\newtheorem{example}{Example}[section]

\newcommand{\R}{\mathbb{R}}

\newcommand{\Z}{\mathbb{Z}}
\newcommand{\C}{\mathbb{C}}
\newcommand{\Q}{\mathbb{Q}}

\newcommand{\Hidden}[1]{}

\newcommand{\1}{\mathbf 1}

\DeclareMathOperator{\Tr}{Tr}

\sloppy

\begin{document}

\title{Pretty good state transfer in graphs with an involution}
\author{Mark Kempton\footnote{Center of Mathematical Sciences and Applications, Harvard University, Cambridge MA, mkempton@cmsa.fas.harvard.edu}~~~~~Gabor Lippner\footnote{Department of Mathematics, Northeastern University, Boston MA, g.lippner@neu.edu}~~~~~Shing-Tung Yau\footnote{Department of Mathematics, Harvard University, Cambridge MA, yau@math.harvard.edu}}
\date{}

\maketitle

\begin{abstract}
We study pretty good quantum state transfer (i.e., state transfer that becomes arbitrarily close to perfect) between vertices of graphs with an involution in the presence of an energy potential.  In particular, we show that if a graph has an involution that fixes at least one vertex or at least one edge, then there exists a choice of potential on the vertex set of the graph for which we get pretty good state transfer between symmetric vertices of the graph.  We show further that in many cases, the potential can be chosen so that it is only non-zero at the vertices between which we want pretty good state transfer.  As a special case of this, we show that such a potential can be chosen on the endpoints of a path to induce pretty good state transfer in paths of any length.  This is in contrast to the result of \cite{us}, in which the authors show that there cannot be perfect state transfer in paths of length 4 or more, no matter what potential is chosen.
\end{abstract}

%%%%%%%%%%%%%%
\section{Introduction}
%%%%%%%%%%%%%%

Given a graph $G$, the \emph{discrete Schr\"odinger equation} on $G$ is given by
\begin{equation}\label{eq:shrod}
\frac{d}{dt}\varphi_t = iH\varphi_t
\end{equation}
where $\varphi_t:V(G)\rightarrow \C$ is a function on the vertex set of $G$, and $H$ is the graph Hamiltonian.  Then equation (\ref{eq:shrod}) describes the evolution of the quantum state of a particle on the graph $G$ with time.  In this paper, we take $H = A+Q$ where $A$ is the adjacency of $G$, and $Q$ is an arbitrary diagonal matrix.  We think of $Q$ as a function $Q:V(G)\rightarrow\R$ that represents energy at each vertex. The function $Q$ is called a \emph{potential} on the vertex set.

\begin{definition}
We say that there is \emph{perfect state transfer} from vertex $u$ to vertex $v$ if, given the initial condition $\varphi_0 = \1_u$, there is some time $T$ at which the solution to (\ref{eq:shrod}) satisfies $|\varphi_T| = \1_v$, that is, $|\varphi_T(v)| = 1$ and $\varphi_T(x) = 0$ for $x\neq v$.  
\end{definition}

In \cite{us}, the authors studied perfect state transfer on graphs in this setting.  In particular, it has been proved that for paths of length at least four, there can be no perfect state transfer between endpoints of the path, no matter what potential one puts on the path.  In the current paper, we study a relaxation of perfect state transfer called \emph{pretty good state transfer}, which  does not require the transfer to be perfect ever, only to get arbitrarily close.  

\begin{definition}
We state that there is \emph{pretty good state transfer} from vertex $u$ to $v$ if, given the initial condition $\varphi_0 =\1_u$, for every $\epsilon>0$ there is some time $T$ such that $|\varphi_T(v)| > 1-\epsilon$.
\end{definition}

It was first observed in \cite{Casaccino2009} via numerical evidence that there might be pretty good state transfer in paths with the appropriate choice of potential at the endpoints. Our first main result confirms this observation.  

%given any path, there is a potential for which pretty good state transfer occurs between the endpoints of the path.  Furthermore, this can be accomplished via a potential that consists of a single value on each of the endpoints.  Thus, the numerical evidence presented in \cite{Casaccino2009} is explained by pretty good state transfer, not perfect state transfer.

\begin{theorem}\label{thm:PGSTpath}
Given a path $P_N$ of any length, there is some choice of $Q$ such that by placing the value $Q$ as a potential on each endpoint of $P_N$ there is pretty good state transfer between the endpoints.
\end{theorem}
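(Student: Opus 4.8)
The plan is to use the reflection symmetry of the path to split the dynamics into two commuting pieces, to translate pretty good state transfer into a Diophantine (Kronecker-type) condition on the eigenvalues of those pieces, and then to choose the scalar $Q$ so that the eigenvalues are ``arithmetically generic'' enough for that condition to hold. Concretely: let $u,v$ be the endpoints of $P_N$ and let $\sigma$ be the reflection interchanging them; $\sigma$ fixes the central vertex when $N$ is odd and the central edge when $N$ is even. Since the potential takes the common value $Q$ at $u$ and $v$, the Hamiltonian $H=A+Q(\1_u\1_u^\top+\1_v\1_v^\top)$ commutes with the permutation matrix $P_\sigma$, hence preserves its $(+1)$- and $(-1)$-eigenspaces $\mathcal S,\mathcal A$. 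The restrictions $H_{\mathcal S},H_{\mathcal A}$ are (weighted) Jacobi matrices of size about $N/2$ -- the two ``folded'' paths, each carrying the potential $Q$ at the vertex coming from $u$, with the central loop/edge contributing only a rational correction. Writing $\1_u=\tfrac1{\sqrt2}(s+a)$ with $s\in\mathcal S$, $a\in\mathcal A$, we get $\1_v=\tfrac1{\sqrt2}(s-a)$ and $e^{itH}\1_u=\tfrac1{\sqrt2}\bigl(e^{itH_{\mathcal S}}s+e^{itH_{\mathcal A}}a\bigr)$. Moreover $s$ and $a$ are, up to scalars, the basis vectors sitting at the endpoint of each folded path, and the endpoint of a weighted path is a cyclic vector for its Jacobi matrix; hence $H_{\mathcal S},H_{\mathcal A}$ have simple spectra and $s$ (resp.\ $a$) has nonzero component along every eigenvector of $H_{\mathcal S}$ (resp.\ $H_{\mathcal A}$).

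\emph{Kronecker reformulation.} Let $\mu_1>\dots>\mu_p$ and $\nu_1>\dots>\nu_q$ be the spectra of $H_{\mathcal S}$ and $H_{\mathcal A}$. Because the evolution is unitary, $e^{itH}\1_u$ comes within $\epsilon$ of a unimodular multiple of $\1_v$ exactly when there are a real $\theta$ and a time $t$ with $e^{it\mu_j}\approx e^{i\theta}$ for all $j$ and $e^{it\nu_k}\approx -e^{i\theta}$ for all $k$. By a standard application of Kronecker's theorem on simultaneous Diophantine approximation, such $t$ exists for every $\epsilon>0$ provided the following holds: whenever integers $(\ell_j)_j,(m_k)_k$ satisfy $\sum_j\ell_j+\sum_k m_k=0$ and $\sum_j\ell_j\mu_j+\sum_k m_k\nu_k=0$, the sum $\sum_k m_k$ is even. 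It therefore suffices to choose $Q$ so that this parity condition is met.

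\emph{Choice of $Q$.} The characteristic polynomials $p_{\mathcal S},p_{\mathcal A}$ lie in $\Q[x]$ (folded edge weights enter only through their squares) and are affine in $Q$. I will take $Q$ transcendental, chosen so that $p_{\mathcal S}$ and $p_{\mathcal A}$ are irreducible over $\Q$ with large Galois groups -- it suffices, e.g., that each acts as its full symmetric group on its roots -- which holds for all $Q$ outside a proper algebraic subset of $\R$, while also having $Q$ avoid one further explicit algebraic condition. Now take $(\ell_j),(m_k)$ satisfying the two displayed equalities. Every automorphism of the splitting field $G$ of $p_{\mathcal S}p_{\mathcal A}$ permutes $\{\mu_j\}$ within itself and $\{\nu_k\}$ within itself, so the relation $\sum\ell_j\mu_j+\sum m_k\nu_k=0$ is $G$-stable; averaging it over $G$ and using $\Tr H_{\mathcal S}=Q+c_{\mathcal S}$, $\Tr H_{\mathcal A}=Q+c_{\mathcal A}$ with $c_{\mathcal S},c_{\mathcal A}\in\Q$ depending only on $N$, together with $\sum_j\ell_j+\sum_k m_k=0$ and the transcendence of $Q$, forces $\sum_j\ell_j=\sum_k m_k=0$ (the avoided value of $Q$ is used here in the even case). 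Substituting $\sum_j\ell_j=0$ and $\sum_j\ell_j\mu_j=0$ and applying transpositions from $G$ then forces all $\ell_j$ equal, hence $\ell_j=0$ for all $j$, and likewise $m_k=0$ for all $k$. So the only integer relation is trivial, the parity requirement holds vacuously, and pretty good state transfer between $u$ and $v$ follows. When $N$ is odd one has instead $c_{\mathcal S}=c_{\mathcal A}$, so the single relation $\sum_j\mu_j=\sum_k\nu_k$ (the difference of traces) survives; it imposes only a harmless congruence on the admissible phase $\theta$ and no parity constraint, so the conclusion is unchanged.

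\emph{Main obstacle.} The delicate point is the last step: producing, for this one-parameter family of weighted Jacobi matrices, a value of $Q$ whose eigenvalues admit no accidental $\Q$-linear relation of the wrong parity. One must either establish a clean generic-Galois statement for the pencils $Q\mapsto p_{\mathcal S},p_{\mathcal A}$ -- irreducibility and large (symmetric-group) Galois group off a thin set of $Q$ -- or else exhibit an explicit admissible $Q$, such as a carefully chosen transcendental, for which the required linear independence is verified directly by a degree or valuation argument under specialization. Everything else -- the symmetry decomposition, the cyclic-vector/full-support property of Jacobi matrices, and the Kronecker reformulation -- is routine.
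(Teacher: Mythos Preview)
Your plan is essentially the paper's own: decompose via the reflection, reduce PGST to the Kronecker/parity condition on the eigenvalues of $H_{\mathcal S}$ and $H_{\mathcal A}$, and then use the field trace (your ``averaging over $G$'') together with irreducibility of $p_{\mathcal S},p_{\mathcal A}$ over $F=\Q(Q)$ to force $\sum_j\ell_j=\sum_k m_k=0$. So the architecture is right; the substantive gap is exactly the one you flag, and the paper closes it more cheaply than you suggest.

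Two corrections to the plan. First, the full symmetric Galois group and the further step ``apply transpositions to make every $\ell_j$ and $m_k$ vanish'' are unnecessary. The parity criterion only asks that $\sum_k m_k$ be even, and the trace argument already yields $\sum_k m_k=0$; nothing more is needed. So you only need \emph{irreducibility} of $p_{\mathcal S}$ and $p_{\mathcal A}$ over $\Q(Q)$, not any statement about the size of the Galois group. Second, your odd-$N$ paragraph is off. You correctly note $c_{\mathcal S}=c_{\mathcal A}=0$, but you overlook that $\deg p_{\mathcal S}=n+1\neq n=\deg p_{\mathcal A}$. The trace identity then reads
\[
\frac{Q}{n+1}\sum_j\ell_j+\frac{Q}{n}\sum_k m_k=0,
\]
and together with $\sum_j\ell_j+\sum_k m_k=0$ and $Q\neq0$ this forces $\sum_k m_k=0$ outright. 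There is no ``surviving relation'' to neutralize: the relation $\sum_j\mu_j=\sum_k\nu_k$ has $\sum_j\ell_j+\sum_k m_k=(n+1)-n=1\neq0$, so it never enters the Kronecker hypothesis.

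As for the gap itself: because $p_{\mathcal S}$ and $p_{\mathcal A}$ are affine in $Q$, say $p_{\mathcal S}=p_1-Q\,q_1$ and $p_{\mathcal A}=p_2-Q\,q_2$ with $p_i,q_i\in\Q[x]$, irreducibility over $\Q(Q)$ is (by Gauss) equivalent to $(p_1,q_1)=1$ and $(p_2,q_2)=1$. For the path these $p_i,q_i$ are characteristic polynomials of shorter paths (possibly with a $\pm1$ loop at one end), and the paper verifies the coprimality directly from the closed-form spectrum $2\cos\!\bigl(k\pi/(m+1)\bigr)$ of $P_m$: the relevant pairs are factors of $p_{2n}$ and $p_{2n-2}$ (even case) or of $p_{2n+1}$ and $p_{2n-1}$ (odd case), and consecutive path polynomials share no roots of the required kind. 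So no Hilbert-irreducibility or generic-Galois machinery is needed; a short explicit computation with Chebyshev-type roots finishes the proof.
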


In \cite{Godsil2012} it is shown that in the absence of potential, pretty good state transfer occurs between endpoints of paths if and only if the number of nodes is $p-1$, $2p-1$ where $p$ is a prime, or if the number of nodes is $2^m-1$.  In \cite{Coutinho2016}, pretty good state transfer between internal nodes of paths is investigated, and surprisingly, there are cases where pretty good state transfer occurs between internal nodes of paths, but not the endpoints. \cite{vanBommel2016} proceeds to give a complete characterization of all cases when pretty good state transfer can occur between any nodes in paths (again, without potential), showing that those cases found in  \cite{Godsil2012} and \cite{Coutinho2016} are the only possibilities. In \cite{PGST}, pretty good state transfer in paths is studied using the Laplacian rather than the adjacency matrix, again without potential.  Our Theorem \ref{thm:PGSTpath} is thus surprising because, in the presence of potential at each endpoint, there is no condition on the number of vertices in the path.

We also prove the following very general result on graphs with an involution (an order two automorphism of the graph).

\begin{theorem}\label{thm:PGSTinvol}
Let $G$ be a connected graph with an involution $\sigma$, let $H$ denote the graph Hamiltonian, and let $Q: V(G)\rightarrow \R$ be a potential on the vertex set satisfying $Q(x) = Q(\sigma x)$ (so that $\sigma$ is also an involution of $H$).  Let $u$ and $v$ be vertices with $v = \sigma u$ and $u\neq v$.  Then if $\sigma$ fixes any vertices or any edges of $G$, then there is a choice of potential $Q$ for which there is pretty good state transfer from $u$ to $v$.
\end{theorem}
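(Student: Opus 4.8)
The plan is to move to the spectral side using the involution, translate pretty good state transfer between $u$ and $\sigma u$ into a statement about simultaneous Diophantine approximation of the eigenvalues of $H$, and then choose $Q$ so that the relevant eigenvalues are sufficiently generic.

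Since $Q(x)=Q(\sigma x)$, the permutation matrix of $\sigma$ commutes with $H$, so $\C^{V(G)}$ splits into the $+1$- and $-1$-eigenspaces of $\sigma$ (the symmetric and antisymmetric functions), each invariant under $H$. Set $w_\pm=\tfrac1{\sqrt2}(\1_u\pm\1_v)$; then $w_+$ is symmetric, $w_-$ is antisymmetric, $\1_u=\tfrac1{\sqrt2}(w_++w_-)$ and $\1_v=\tfrac1{\sqrt2}(w_+-w_-)$. Fix a real orthonormal eigenbasis $f_1,\dots,f_n$ of $H$ adapted to this splitting, $Hf_j=\la_j f_j$, and let $P$ (resp.\ $M$) be the set of $j$ with $f_j$ symmetric (resp.\ antisymmetric) and $f_j(u)\neq0$; both are nonempty because $w_\pm\neq0$. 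Expanding $(e^{itH}\1_u)(v)=\sum_j e^{it\la_j}f_j(u)f_j(v)$ and using $f_j(v)=f_j(u)$ for $j\in P$ and $f_j(v)=-f_j(u)$ for $j\in M$, together with $\sum_j f_j(u)^2=1$, one checks that $|(e^{itH}\1_u)(v)|\to1$ along a sequence $t_k$ if and only if there is a unimodular $\gamma$ with $e^{it_k\la_j}\to\gamma$ for every $j\in P$ and $e^{it_k\la_j}\to-\gamma$ for every $j\in M$. By Kronecker's theorem on simultaneous approximation, such times exist exactly when every integer relation $\sum_{j\in P\cup M}c_j\la_j=0$ with $\sum_{j\in P\cup M}c_j=0$ satisfies $\sum_{j\in M}c_j\equiv0\pmod2$; in particular it is enough that the numbers $\{\la_j:j\in P\cup M\}$ be affinely independent over $\Q$ (distinct, with $\Q$-linearly independent pairwise differences).

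Thus the theorem reduces to producing a $\sigma$-invariant potential $Q$ for which the eigenvalues of $H=A+Q$ whose eigenvectors do not vanish at $u$ are affinely $\Q$-independent (or at least obey the parity condition above). Writing $H$ in orbit coordinates, its symmetric and antisymmetric parts $H_+$ and $H_-$ are built from the same within-orbit adjacency and potential data plus the cross-orbit adjacency matrix $B$, which enters $H_+$ with a $+$ sign and $H_-$ with a $-$ sign. A vertex fixed by $\sigma$ contributes a diagonal potential entry seen only by $H_+$; an edge fixed by $\sigma$ contributes a nonzero diagonal entry of $B$, seen by $H_+$ and $H_-$ with opposite signs. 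In either case there is a potential parameter $c$ that shifts $\operatorname{spec}(H_+)$ relative to $\operatorname{spec}(H_-)$, and I would first use it, with $c$ large, to make all eigenvalues of $H$ simple and to force the parts of $\operatorname{spec}(H_+)$ and $\operatorname{spec}(H_-)$ that overlap $u$ disjoint; this removes the only rigid obstruction, namely an eigenvalue carried by both a symmetric and an antisymmetric eigenvector. For the finer condition I would argue by genericity: after elimination, the condition that $\{\la_j\}$ satisfy a fixed nontrivial integer affine relation is a polynomial (hence real-analytic) equation in the entries of $Q$, so it cuts out a nowhere dense set provided some admissible $Q$ violates it; letting $c\to\infty$, the unbounded eigenvalues have expansions $c+O(1/c)$ (shifted according to the fixed vertex or edge) that cannot satisfy any fixed affine relation with the bounded eigenvalues, and the bounded eigenvalues can be made generic by induction on the size of the graph. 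A countable union of nowhere dense sets cannot be all $\sigma$-invariant potentials, so a good $Q$ exists. (Theorem~\ref{thm:PGSTpath} is the case where $G$ is a path and the construction is carried out with $Q$ supported on $\{u,v\}$ only.)

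The step I expect to be the main obstacle is exactly this last one: showing that a single fixed vertex or fixed edge supplies enough spectral freedom to break \emph{all} of the infinitely many possible resonances among the support eigenvalues, not merely finitely many. This needs a careful asymptotic analysis of the eigenvalues of the two folded operators $H_\pm$ as a potential value grows, and a quantitative handle on how a fixed vertex or edge desymmetrizes $H_+$ from $H_-$; by contrast, the reductions to simple spectrum and disjoint spectra, and the Kronecker translation, should be comparatively routine.
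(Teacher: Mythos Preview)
Your reduction to the Kronecker/parity condition via the $\pm1$-eigenspaces of $\sigma$ is exactly Lemma~\ref{lem:eig} together with Lemma~\ref{lem:invol_factor}, so through that point you and the paper agree. The divergence is in how the parity condition is verified. The paper does \emph{not} argue by genericity or asymptotics: it chooses the potential values $Q_1,\dots,Q_{n+|S|}$ to be algebraically independent transcendentals, proves (Lemma~\ref{lem:irred}) that $P_+$ is then irreducible over $\Q(Q_1,\dots,Q_{n+|S|})$ using the Gauss Lemma, eigenvalue interlacing, and Perron--Frobenius, and then applies the field trace $\Tr_{K/F}$ of the splitting field extension to any relation $\sum\ell_i\lambda_i+\sum m_j\mu_j=0$. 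Because the Galois group acts transitively on the roots of an irreducible factor, the trace collapses each $\lambda_i$ to $\frac{1}{n+|S|}\Tr(H_+)$ and each $\mu_j$ similarly; the presence of a fixed vertex (which contributes $\hat Q$ only to $\Tr(H_+)$) or a fixed edge (which contributes $+k$ to $\Tr(H_+)$ and $-k$ to $\Tr(H_-)$) then forces $\sum m_j=0$ outright. This argument handles all integer relations at once, with no limiting procedure and no induction.

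Your Baire-category route is a genuinely different idea, and it is plausible, but the step you flag as the obstacle is in fact a real gap as written. Two specific issues: first, for a fixed \emph{edge} there is no potential parameter attached to the edge itself---the diagonal entry of $B$ is a fixed $1$, not a tunable $c$---so the mechanism ``shift $\operatorname{spec}(H_+)$ relative to $\operatorname{spec}(H_-)$ by sending $c\to\infty$'' does not apply directly; you would instead need to vary potentials on the orbit vertices and track how the fixed edge breaks the $H_+/H_-$ symmetry, which is more delicate. Second, sending one potential to infinity produces one eigenvalue growing like $c+O(1)$ while the others converge; this can defeat relations that involve the escaping eigenvalue with nonzero coefficient, but gives no leverage on relations supported on the bounded eigenvalues, and your ``induction on the size of the graph'' for those is not specified (induction on what graph, with what involution, preserving which fixed object?). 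The paper's field-trace computation sidesteps all of this by exploiting irreducibility rather than asymptotics, which is what makes the fixed-vertex/fixed-edge hypothesis enter so cleanly through the traces of $H_+$ and $H_-$.
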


 Critical in this result is the observation that for a graph with an involution, if the potential that we put on the vertices is symmetric across the involution, then the symmetry of the graph naturally gives rise to a factorization of the characteristic polynomial of the graph Hamiltonian into two factors.  We will, throughout, refer to these factors as $P_+$ and $P_-$ (for reasons that will become clear later).  Understanding these factors will be a major key in our proofs.  We will see that $Q$ can always be chosen so that $P_+$ is an irreducible polynomial, which, as we will see, is an important element in the proofs.

The primary ingredient in our proofs is Lemma \ref{lem:eig}, which characterizes pretty good state transfer on graphs in terms of a condition on the eigenvectors of the Hamiltonian, as well as a number theoretic condition on its eigenvalues.   As we will see, graph with an involution will always automatically satisfy the condition on the eigenvectors, so the work in our proofs involves analyzing the number theoretic properties of the eigenvalues of the Hamiltonian.  

We will also discuss cases when a graph with an involution has no fixed points or edges.  These cases are harder to deal with, and many interesting scenarios can arise.

%%%%%%%%%%%%%%
\section{Preliminaries}
%%%%%%%%%%%%%%

Given a graph $G$ with $n$ vertices let $H = A +Q$ denote the graph Hamiltonian, where $A$ is the adjacency matrix, and $Q=diag(Q_1,\cdots,Q_n)$ a diagonal matrix with real entries.  Let $\varphi_0:V(G) \rightarrow \C$ be a complex-valued function on the vertex set of $G$ satisfying $||\varphi_0||_2 = 1$.  Define 
\[ \varphi_t(x) = e^{itH}\varphi_0(x)\]
and observe that $\varphi_t$ is a solution of (\ref{eq:shrod}).  We will denote $U(t) = e^{itH}$.  Note that the exponential of the matrix is given by 
\[
U(t) = e^{itH} = \sum_\lambda e^{it\lambda}xx^T
\]
where the sum is taken over eigenvalues $\lambda$ of $H$ and $x$ is the corresponding unit eigenvector.  In particular, note that 
\begin{equation}\label{eq:u0}
I = U(0) = \sum_\lambda xx^T.
\end{equation}
In addition, since $H$ is symmetric, each $\lambda$ above is real, and each $x$ can be assumed to have all real entries.  Observe also that $U(t)$ is a unitary matrix for all $t$, and therefore $||\varphi_t||^2 =  1$ for all $t$. Let $\1_u$ denote the indicator vector for vertex $u$. We say that there is \emph{pretty good state transfer} from $u$ to $v$, if, for any $\epsilon>0$, there is a time $T$ such that 
\[
||U(T)\1_u -  \gamma\1_v || < \epsilon
\]
for some $\gamma\in\C$ with $|\gamma|=1$. Equivalently, for all $\epsilon>0$, there is a time $T$ with
\[|U(T)_{u,v}| > 1-\epsilon.\]

\begin{lemma}[Kronecker]\label{lem:Kron}
Let $\theta_0,...,\theta_d$ and $\zeta_0,...,\zeta_d$ be arbitrary real numbers.  For an arbitrarily small $\epsilon$, the system of inequalities
\[
|\theta_ry - \zeta_r| < \epsilon ~~ (mod ~2\pi), ~~ (r=0,...,d),
\]
has a solution $y$ if and only if, for integers $\ell_0,...,\ell_d$, if
\[
\ell_0\theta_0+\cdots+\ell_d\theta_d = 0,
\]
then
\[
\ell_0\zeta_0 + \cdots + \ell_d\zeta_d \equiv 0 ~~ (mod ~2\pi).
\]
\end{lemma}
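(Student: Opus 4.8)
This is the classical Kronecker approximation theorem, so my plan is to reduce it to the standard description of closed subgroups of a torus. Write $\mathbb{T} = \R/2\pi\Z$, let $\theta = (\theta_0,\dots,\theta_d)$, view $\zeta = (\zeta_0,\dots,\zeta_d)$ as a point of $\mathbb{T}^{d+1}$, and consider the one-parameter subgroup $\phi\colon \R \to \mathbb{T}^{d+1}$ given by $\phi(y) = (\theta_0 y,\dots,\theta_d y) \bmod 2\pi$. The system of inequalities has a solution $y$ for every $\epsilon > 0$ exactly when $\zeta$ belongs to the closure $\overline{\phi(\R)}$, so the whole lemma amounts to identifying that closure.

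The necessity direction is a one-line computation that I would record first: if some $y$ satisfies $|\theta_r y - \zeta_r| < \epsilon \pmod{2\pi}$ for all $r$, then for any integers $\ell_0,\dots,\ell_d$ with $\sum_r \ell_r \theta_r = 0$ we have $\sum_r \ell_r\zeta_r = \sum_r \ell_r(\zeta_r - \theta_r y)$, and since each $\zeta_r - \theta_r y$ lies within $\epsilon$ of $2\pi\Z$, the left-hand side lies within $\bigl(\sum_r|\ell_r|\bigr)\epsilon$ of $2\pi\Z$; letting $\epsilon \to 0$ with the $\ell_r$ fixed forces $\sum_r \ell_r\zeta_r \in 2\pi\Z$.

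For sufficiency I would set $L = \{(\ell_0,\dots,\ell_d)\in\Z^{d+1} : \sum_r \ell_r\theta_r = 0\}$ and use that the characters of $\mathbb{T}^{d+1}$ are precisely the maps $t \mapsto e^{i\ell\cdot t}$ with $\ell \in \Z^{d+1}$. Such a character is trivial on $\phi(\R)$ iff $e^{iy(\ell\cdot\theta)} = 1$ for all $y\in\R$, i.e.\ iff $\ell \in L$. Since $\overline{\phi(\R)}$ is a closed subgroup of the compact abelian group $\mathbb{T}^{d+1}$, Pontryagin duality (equivalently, that the characters of a compact abelian group separate the points of any quotient) yields that a closed subgroup equals the common kernel of the characters vanishing on it; hence $\overline{\phi(\R)} = \{\,t\in\mathbb{T}^{d+1} : \ell\cdot t \equiv 0 \pmod{2\pi}\ \text{ for all } \ell\in L\,\}$. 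The hypothesis of the lemma is exactly the statement that $\zeta$ lies in this set, so $\zeta\in\overline{\phi(\R)}$, which is what we needed.

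The main obstacle is the sufficiency half, and specifically its one nonelementary ingredient: that a closed subgroup of $\mathbb{T}^{d+1}$ is cut out by the integer linear relations it satisfies modulo $2\pi$. To keep things self-contained one can prove this separation property by hand for tori --- approximating a suitable continuous function by a trigonometric polynomial and averaging over $\overline{\phi(\R)}$ to extract a character that is trivial on $\overline{\phi(\R)}$ but not at $\zeta$ --- or one can simply invoke the classical treatment (e.g.\ Hardy and Wright). Everything else --- that the closure of a subgroup is a subgroup, and the computation of which characters kill $\phi(\R)$ --- is routine.
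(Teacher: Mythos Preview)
Your argument is correct; this is a standard route to Kronecker's theorem via the identification of closed subgroups of $\mathbb{T}^{d+1}$ with annihilators in the dual lattice. The necessity computation is clean, and the sufficiency step is exactly the Pontryagin duality statement you invoke.

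There is nothing to compare against, however: the paper does not prove this lemma. It is simply stated as the classical Kronecker approximation theorem and used as a black box in the proof of the subsequent lemma on spectral conditions for pretty good state transfer. So your proposal supplies a proof where the paper offers none; if you want to match the paper's treatment you would just cite the result (e.g.\ Hardy--Wright or Levitan) and move on.
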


The following lemma is derived from results in \cite{PGST}.  We give a proof for completeness.

\begin{lemma}\label{lem:eig}
Let $u,v$ be vertices of $G$, and $H$ the Hamiltonian.  Then pretty good state transfer from $u$ to $v$ occurs at some time if and only if the following two conditions are satisfied:
\begin{enumerate}
\item Every eigenvector $x$ of $H$ satisfies either $x(u) = x(v)$ or $x(u) = -x(v)$.
\item Let $\{\lambda_i\}$ be the eigenvalues of $H$ corresponding to eigenvectors with $x(u) = x(v)\neq0$, and $\{\mu_j\}$ the eigenvalues for eigenvectors with $x(u) = -x(v)\neq0$.  Then if there exist integers $\ell_i$, $m_j$ such that if
\begin{align*}
\sum_i \ell_i\lambda_i +\sum_j m_j\mu_j = 0\\
\sum_i\ell_i +\sum_j m_j =0
\end{align*}
then
\[
\sum_i m_i \text{ is even.}
\]
\end{enumerate}
\end{lemma}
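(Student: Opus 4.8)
The plan is to rewrite the definition of pretty good state transfer entirely in spectral terms, using the decomposition $U(t) = \sum_\lambda e^{it\lambda}P_\lambda$ with $P_\lambda$ the orthogonal projection onto the $\lambda$-eigenspace, and then to reduce the resulting statement to an application of Kronecker's Lemma (Lemma~\ref{lem:Kron}). The conceptual core is that, after one factors out a global phase, PGST becomes exactly the kind of simultaneous Diophantine approximation condition that Kronecker's theorem characterizes, and the number-theoretic conclusion it produces is Condition~(2).

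First I would establish that Condition~(1) is equivalent to \emph{strong cospectrality} of $u$ and $v$, i.e.\ $P_\lambda\1_u = \pm P_\lambda\1_v$ for every eigenvalue $\lambda$. One direction is a one-line computation, since $x(u) = \langle x,\1_u\rangle = \langle x,P_\lambda\1_u\rangle$ for $x$ in the $\lambda$-eigenspace; for the converse I would observe that if two eigenvectors in a common eigenspace had opposite signs at $u$ and $v$, a suitable linear combination would be an eigenvector violating the $x(u)=\pm x(v)$ dichotomy unless one of them vanishes at both $u$ and $v$. I would then prove directly that PGST forces Condition~(1): projecting the inequality $\|U(T)\1_u - \gamma\1_v\| < \epsilon$ onto each eigenspace gives $\|e^{iT\lambda}P_\lambda\1_u - \gamma P_\lambda\1_v\| < \epsilon$, which forces $\|P_\lambda\1_u\| = \|P_\lambda\1_v\|$ and, through the equality case of Cauchy--Schwarz applied to $\langle P_\lambda\1_u,P_\lambda\1_v\rangle$ as $\epsilon\to 0$, forces $P_\lambda\1_u = \pm P_\lambda\1_v$.

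With strong cospectrality available, the eigenvalues split into three disjoint classes: the $\lambda_i$ with $P_{\lambda_i}\1_u = P_{\lambda_i}\1_v \neq 0$, the $\mu_j$ with $P_{\mu_j}\1_u = -P_{\mu_j}\1_v \neq 0$, and the rest, with $P_\lambda\1_u = P_\lambda\1_v = 0$; neither of the first two classes is empty, since otherwise $\1_v = \pm\1_u$, contradicting $u\neq v$. Expanding $\|U(T)\1_u - \gamma\1_v\|^2$ in this orthogonal decomposition yields $\sum_i|e^{iT\lambda_i}-\gamma|^2\|P_{\lambda_i}\1_u\|^2 + \sum_j|e^{iT\mu_j}+\gamma|^2\|P_{\mu_j}\1_u\|^2$ (the third class contributing nothing), with fixed positive weights summing to $1$ by reading $I = \sum_\lambda P_\lambda$ at the diagonal entry $(u,u)$. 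Hence PGST from $u$ to $v$ is equivalent to: for every $\delta>0$ there are $T$ and a unit $\gamma$ with $|e^{iT\lambda_i}-\gamma| < \delta$ for all $i$ and $|e^{iT\mu_j}+\gamma|<\delta$ for all $j$.

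Finally I would eliminate $\gamma$ by taking $\gamma = e^{iT\lambda_1}$ for a fixed reference index $1$ from the first class: the condition above becomes solvability in $y=T$ of $|y(\lambda_i-\lambda_1)| < \epsilon$ and $|y(\mu_j-\lambda_1)-\pi|<\epsilon$ modulo $2\pi$, for all $i\ge 2$ and all $j$. Kronecker's Lemma says this holds iff every integer relation $\sum_{i\ge 2}\ell_i(\lambda_i-\lambda_1) + \sum_j m_j(\mu_j-\lambda_1) = 0$ forces $\sum_j m_j$ to be even; and setting $\ell_1 := -\big(\sum_{i\ge 2}\ell_i + \sum_j m_j\big)$ converts such relations bijectively into relations $\sum_i\ell_i\lambda_i + \sum_j m_j\mu_j = 0$ subject to $\sum_i\ell_i + \sum_j m_j = 0$, which is exactly Condition~(2). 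I expect the analysis itself to be painless, since Kronecker's Lemma carries it; the main obstacles are the bookkeeping obstacles, namely the careful treatment of repeated eigenvalues in the passage between Condition~(1) and strong cospectrality, and the slightly fussy reparametrization (introducing $\ell_1$ and discarding the global phase $\gamma$) needed to rewrite the Kronecker condition in the symmetric, constraint-coupled form in which Condition~(2) is stated.
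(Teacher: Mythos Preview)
Your proposal is correct and follows essentially the same route as the paper: project the near-transfer inequality onto eigenspaces to obtain strong cospectrality, then rewrite PGST as a simultaneous approximation of the phases $e^{it\lambda_i}$ and $e^{it\mu_j}$ and invoke Kronecker's Lemma with targets $0$ and $\pi$. Your version is in fact slightly more careful than the paper's, since you work with the spectral projections $P_\lambda$ rather than a fixed orthonormal eigenbasis, which cleanly handles repeated eigenvalues, and you spell out the bijection between Kronecker relations and the constrained relations in Condition~(2) via the substitution $\ell_1 = -\big(\sum_{i\ge 2}\ell_i + \sum_j m_j\big)$.
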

\begin{proof}
Suppose that pretty good state transfer occurs from vertex $u$ to vertex $v$.
Let $\{x_i\}$ be a set of orthonormal eigenvectors of $H$, and $\{\lambda_i\}$ the corresponding eigenvalues.  Then we can write
\[
U(t) = \sum_{i=1}^n e^{it\lambda_i}x_ix_i^T.
\]
Then for some $\gamma$ of modulus 1, for every $\epsilon>0$ there is some $t$ for which
\[
\left\|\sum_j e^{it\lambda_j}x_jx_j^T\mathbf 1_u - \gamma\mathbf 1_v \right\| < \epsilon.
\]
Multiplying by the projection matrix $x_kx_k^T$ for some $k$, it is not hard to see that this implies that 
\[
\left| e^{it\lambda_k}x_k(u) - \gamma x_k(v) \right| < \epsilon.
\]
Since $|e^{it\lambda_k}| = |\gamma| = 1$, the triangle inequality implies that 
\[
\big||x_k(u)| - |x_k(v)|\big| < \epsilon.
\]
This is true for all $\epsilon>0$, so we see that $|x_k(u)| = |x_k(v)|$, and $x_k$ can be taken to be a real vector, since $H$ is real and symmetric. Thus, $x_k(u) = \pm x_k(v)$, giving the first condition.

To see the second condition, for pretty good transfer to occur, for every $\epsilon>0$ we must have a time $t$ for which
\[
\left|\sum_j e^{it\lambda_j}x_j(u)^2 - \sum_je^{it\mu_j}x_j(u)^2 \right| > 1-\epsilon.
\]
From (\ref{eq:u0}), we know that $\sum x_j(u)^2=1$, so for this sum to be close to 1, we need the phases of the $e^{it\lambda_j}$ and $e^{it\mu_j}$ coefficients to be close to lining up to point in the same direction.  In other words, an equivalent formulation is for any $\epsilon>0$, there is a time $t$ such that 
\[
e^{it\lambda_i} = e^{i(t\lambda_0+\epsilon)} \text{ and } e^{it\mu_j} = -e^{i(t\lambda_0+\epsilon)}
\]
for all $i$ and $j$.  That is, for every $\epsilon>0$ there is a time $t$ such that
\[\begin{split}
\left|(\lambda_i - \lambda_0)t - 0\right| &< \epsilon~~~~(mod~2\pi)\\
\left|(\mu_j - \lambda_0)t - \pi\right| &< \epsilon~~~~(mod~2\pi).
\end{split}\]
Taking $\zeta_i$ to be 0 for the first set of equations, and $\pi$ for the second set, Lemma \ref{lem:Kron} implies that this has a solution $t$ if and only if, given integers $\ell_i$ and $m_j$ with 
\[
\sum_i \ell_i(\lambda_i - \lambda_0) + \sum_j m_j(\mu_j - \lambda_0) = 0,
\]
then we must have \[\sum_j m_j\pi \equiv 0~~~~(mod~2\pi).\]

Rearranging the first equation and dividing by $\pi$ in the second gives the condition of the lemma.

\end{proof}

We remark that the second condition in the preceding lemma implies that we do not get pretty good state transfer if there is a repeated eigenvalue belonging to both different types.  Indeed, if $\lambda_r=\mu_s$ for some $r,s$, then we can simply choose all the $\ell_r = 1$ and $m_s = -1$ and all other $\lambda_i$'s and $\mu_j$'s to be 0, then clearly $\sum m_j$ is odd, so condition 2 cannot hold.  Thus we need not concern ourselves with the case of eigenvalues with multiplicity where the eigenspace has vectors of both types.  

We remark also that condition 2 only puts a restriction on the eigenvalues corresponding to eigenvectors that do not vanish on $u$ or $v$.  If there is an eigenvector that vanishes on $u$ and $v$, then it need not be considered when trying to show that pretty good state transfer occurs in a graph.

Lemma \ref{lem:eig}, which gives spectral conditions for pretty good state transfer, is comparable to Lemma 2.1 in \cite{us}, which gives spectral conditions for perfect state transfer in a graph.  

Condition 1 in Lemma \ref{lem:eig} has a special name.  Two vertices $u,v$ for which $x(u) = \pm x(v)$ for every eigenvector $x$ of $H$ are called \emph{strongly cospectral} with respect to $H$.  Strong cospectrality is a strengthening of the notion of \emph{cospectral} vertices: vertices $u$ and $v$ are said to be cospectral if $G\setminus u$ and $G\setminus v$ are cospectral graphs.  More details on cospectral and strongly cospectral vertices can be found in \cite{godsil}.

%%%%%%%%%%%%%%%%%
\section{Graphs with an involution}
%%%%%%%%%%%%%%%%%

In the previous section, we saw in Lemma \ref{lem:eig} that pretty good state transfer in a graph can be characterized by the strong cospectrality of the vertices, and a number theoretic condition on the eigenvalues of the Hamiltonian.  We will see in this section that an involution in a graph automatically gives us the strong cospectrality condition, as long as the potential on the vertices respects the symmetry of the graph.  The involution also leads to a factorization of the characteristic polynomial of the Hamiltonian, so investigating pretty good state transfer on graphs with an involution comes down to studying the number theoretic properties of these factors to see when condition 2 of Lemma \ref{lem:eig} is satisfied.

Let $G$ be a graph with a non-trivial involution $\sigma$, and let $u,u'$ denote vertices of $G$ that are ``symmetric" across the involution, that is, $u' = \sigma(u)$.    Let $Q:V(G)\rightarrow \R$ be a potential on the vertex set of $G$, and throughout this section, we will require $Q$ to respect the involution $\sigma$ on $G$, namely we require $Q(u) = Q(u')$ for all $u\in V(G)$. Let $S=\{x\in V(G):\sigma x = x\}$ be the set of vertices fixed by $\sigma$.  We say that an edge, $uv\in E(G)$ is fixed by $\sigma$ if $v=\sigma u$. Throughout the rest of the paper, let $N=|V(G)|$ denote the number of vertices of $G$, and let us write $N = 2n+|S|$, so that $|S|$ is the number of fixed vertices, and $n$ is the number of vertices on either ``side" of the involution.  Finally, let us denote by $G'$ the ``half-graph" induced by the involution. More precisely, $G'$ is obtained from $G\setminus S$ choosing exactly one of $(x,\sigma x)$ for each $x$, and taking the induced subgraph of $G$ on the chosen vertices.  Note that there are (at least) two copies of $G'$ contained in $G$ on either side of the involution. We naturally put a potential $Q'$ on the vertices of $G'$ by restricting $Q$ to $G'$.

Given any $G$ and $G'$ as described above, then with the appropriate labeling of the vertices, the Hamiltonian $H$ can be written as follows:
\[
H = \begin{bmatrix}
H'&A_\sigma&A_S\\A_\sigma&H'&A_S\\A_S^T&A_S^T&H_S
\end{bmatrix}.
\]
Here, $H'$ denotes the $n\times n$ Hamiltonian of $G'$, $H_S$ denotes the $|S|\times|S|$ restriction of $H$ to the fixed set $S$, $A_\sigma$ denotes the part of the adjacency matrix for the edges with endpoints on both sides of the involution, and $A_S$ denotes the part of the adjacency matrix with the edges going from $S$ to its complement.  Note that $A_\sigma$ does not need a transpose in the second row, because $A_\sigma$ is a symmetric matrix because of the involution.  Indeed, an edge fixed by $\sigma$ will be on the diagonal of $A_\sigma$, and edges not fixed by $\sigma$ will come in pairs that are interchanged by $\sigma$.  These will occur in symmetric off-diagonal positions in $A_\sigma$.

To illustrate the above terminology and notation, let us give a small example. Let $G$ be the graph from Figure \ref{fig:invol}.  Note that $G$ has an involution $\sigma$ given by the symmetry in the drawing of the figure, namely $\sigma$ interchanges $v_1$ with $v_4$, $v_2$ with $v_5$, and $v_3$ with $v_6$, and fixes $v_7$.  The set $S$ of fixed vertices consists of the single vertex $v_7$.  The edge $v_3v_6$ is a fixed edge of the involution.  The edges $v_1v_5$ and $v_2v_4$ are not fixed edges, but are interchanged by $\sigma$.  Here $N=7$, and $n=3$, and the graph $G'$ is simply a path on 3 vertices (this can be thought of as the graph induced by $v_1,v_2,v_3$, or the graph induced by $v_4,v_5,v_6$ in $G$).

\tikzstyle{every node} = [circle, fill=black, inner sep=0pt, minimum width=4pt]
\begin{figure}[h]
\begin{center}
\begin{tikzpicture}
\draw (-1,0)--(1,0)(-1,2)--(1,1) (1,2)--(-1,1)(-1,2)node{}--(-1,1)node{}--(-1,0)node{}--(0,-1)node{}--(1,0)node{}--(1,1)node{}--(1,2)node{};
\draw (-1,2)node[left,fill=none]{$v_1$}(-1,1)node[left,fill=none]{$v_2$} (-1,0)node[left,fill=none]{$v_3$} (0,-1)node[below,fill=none]{$v_7$} (1,0)node[right,fill=none]{$v_6$}  (1,1)node[right,fill=none]{$v_5$}
(1,2)node[right,fill=none]{$v_4$};
\end{tikzpicture}
\end{center}
\caption{A graph with an involution.}\label{fig:invol}
\end{figure}
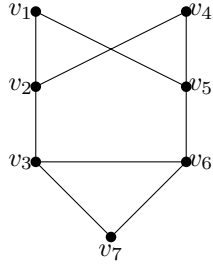

The Hamiltonian for the graph in Figure \ref{fig:invol} is given by
\[
H = \left[\begin{array}{ccc|ccc|c}
Q_1&1&0&0&1&0&0\\1&Q_2&1&1&0&0&0\\0&1&Q_3&0&0&1&1\\ \hline 0&1&0&Q_1&1&0&0\\1&0&0&1&Q_2&1&0\\0&0&1&0&1&Q_3&1\\ \hline 0&0&1&0&0&1&Q_4
\end{array}\right]
\]
with the partitions giving the block form described above.

With this terminology and notation, we can give the following.

\begin{lemma}\label{lem:invol_factor}
Let $G$ be a graph with an involution $\sigma$.  Then the characteristic polynomial $P(x)$ of the Hamiltonian $H$ of $G$ factors into two factors $P_+(x)$ and $P_-(x)$ which are, respectively, the characteristic polynomials of $H_+ := \begin{bmatrix}H'+A_\sigma &A_S\\2A_S^T&H_S\end{bmatrix}$ and $H_- := H' - A_\sigma$, where each of these matrices is as defined above.  

Furthermore, the eigenvectors of $H$ take the form $[a ~ a ~ b]^T$ and $[c ~ -c ~ 0]^T$ where $[a ~ b]^T$ is an eigenvector for $H_+$, and $c$ an eigenvector for $H_-$.
\end{lemma}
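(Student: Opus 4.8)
The plan is to identify two complementary, mutually orthogonal, $H$-invariant subspaces of $\R^N$ on which $H$ acts (up to a change of basis) as $H_+$ and $H_-$, and then read off both assertions from this. Using the block decomposition of $V(G)$ into the two sides of the involution and the fixed set $S$, in which $H$ has the displayed block form, I would set
\[
\mathcal V_+ = \{\,[a ~ a ~ b]^T : a\in\R^n,\ b\in\R^{|S|}\,\}, \qquad \mathcal V_- = \{\,[c ~ -c ~ 0]^T : c\in\R^n\,\}.
\]
A direct block multiplication gives
\[
H[a ~ a ~ b]^T = [\,(H'+A_\sigma)a+A_Sb ~~ (H'+A_\sigma)a+A_Sb ~~ 2A_S^Ta+H_Sb\,]^T
\]
and
\[
H[c ~ -c ~ 0]^T = [\,(H'-A_\sigma)c ~~ -(H'-A_\sigma)c ~~ 0\,]^T,
\]
so $\mathcal V_+$ and $\mathcal V_-$ are both $H$-invariant. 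Reading the first line through the linear isomorphism $[a ~ b]^T \mapsto [a ~ a ~ b]^T$ from $\R^{n+|S|}$ onto $\mathcal V_+$ shows that $H|_{\mathcal V_+}$ is conjugate to $H_+$, and reading the second line through $c \mapsto [c ~ -c ~ 0]^T$ from $\R^n$ onto $\mathcal V_-$ shows that $H|_{\mathcal V_-}$ is conjugate to $H_-$.

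Next I would note that $\dim\mathcal V_+ + \dim\mathcal V_- = (n+|S|) + n = N$ and that $\langle [a ~ a ~ b]^T,\, [c ~ -c ~ 0]^T\rangle = a^Tc - a^Tc = 0$, so $\R^N = \mathcal V_+ \oplus \mathcal V_-$ is an orthogonal direct sum into $H$-invariant pieces. Hence the characteristic polynomial of $H$ is the product of those of $H|_{\mathcal V_+}$ and $H|_{\mathcal V_-}$; since conjugate matrices have equal characteristic polynomials, these equal $P_+(x) = \det(xI-H_+)$ and $P_-(x) = \det(xI-H_-)$, giving $P = P_+P_-$.

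For the eigenvector form, I would use that $H$ is real symmetric and that $\mathcal V_+,\mathcal V_-$ are orthogonal $H$-invariant subspaces, so the orthogonal projections $\pi_\pm$ onto them commute with $H$. Thus if $Hx=\lambda x$ then $x = \pi_+x + \pi_-x$ with $H\pi_\pm x = \lambda\pi_\pm x$, so $x$ is a sum of an eigenvector in $\mathcal V_+$ and one in $\mathcal V_-$; choosing an eigenbasis inside $\mathcal V_+\cup\mathcal V_-$ and transporting it back through the two isomorphisms produces eigenvectors exactly of the forms $[a ~ a ~ b]^T$ with $[a ~ b]^T$ an eigenvector of $H_+$, and $[c ~ -c ~ 0]^T$ with $c$ an eigenvector of $H_-$. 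I expect no essential obstacle: the whole content is that the involution splits the state space into its symmetric and antisymmetric parts. The only points deserving care are the bookkeeping of the three block sizes $n, n, |S|$, and the fact that $H_+$ is not symmetric — it is merely conjugate (not orthogonally conjugate) to the self-adjoint operator $H|_{\mathcal V_+}$ — so the eigenvalue/eigenvector correspondence should be phrased via the isomorphism $[a ~ b]^T \mapsto [a ~ a ~ b]^T$ rather than by claiming that $H_+$ is itself orthogonally diagonalizable.
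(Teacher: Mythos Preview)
Your proposal is correct and follows essentially the same route as the paper: both rely on the identical block computations $H[a~a~b]^T=[(H'+A_\sigma)a+A_Sb,~(H'+A_\sigma)a+A_Sb,~2A_S^Ta+H_Sb]^T$ and $H[c~-c~0]^T=[(H'-A_\sigma)c,~-(H'-A_\sigma)c,~0]^T$, followed by the dimension count $(n+|S|)+n=N$. The only difference is packaging---you phrase it as an invariant-subspace decomposition $\R^N=\mathcal V_+\oplus\mathcal V_-$ with $H|_{\mathcal V_\pm}$ conjugate to $H_\pm$, whereas the paper lifts eigenvectors of $H_\pm$ directly to eigenvectors of $H$ and then invokes linear independence---but the mathematical content is the same; your explicit remark that $H_+$ is only conjugate (not orthogonally conjugate) to a symmetric operator is a nice clarifying note the paper omits.
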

\begin{proof}
Suppose that we have
\[
\begin{bmatrix}
H'+A_\sigma&A_S\\2A_S^T&H_S
\end{bmatrix}\begin{bmatrix}
a\\b
\end{bmatrix} = \lambda\begin{bmatrix}
a\\b
\end{bmatrix}.
\]
Then $(H'+A_\sigma)a + A_Sb = \lambda a$ and $2A_S^Ta+H_Sb = \lambda b$.  Therefore
\[
\begin{bmatrix}
H'&A_\sigma&A_S\\A_\sigma&H'&A_S\\A_S^T&A_S^T&H_S
\end{bmatrix}\begin{bmatrix}
a\\a\\b
\end{bmatrix}=\begin{bmatrix}
(H'+A_\sigma)a+A_Sb\\(H'+A_\sigma)a+A_Sb\\2A_S^Ta+H_Sb
\end{bmatrix}=\lambda\begin{bmatrix}
a\\a\\b
\end{bmatrix}.
\]
Therefore any eigenvalue of $H_+$ (including multiplicity) is an eigenvalue of $H$, so its characteristic polynomial, $P_+(x)$ divides the characteristic polynomial $P(x)$.

Similarly, if $(H'-A_\sigma)c = \mu c$, then
\[
\begin{bmatrix}
H'&A_\sigma&A_S\\A_\sigma&H'&A_S\\A_S^T&A_S^T&H_S
\end{bmatrix}\begin{bmatrix}
c\\-c\\0
\end{bmatrix}=\begin{bmatrix}
(H'-A_\sigma)c\\-(H'-A_\sigma)c\\0
\end{bmatrix}=\mu\begin{bmatrix}
c\\-c\\0
\end{bmatrix}.
\]
Therefore any eigenvalue of $H_-$ is an eigenvalue of $H$ including multiplicity, so $P_-(x)$ divides $P(x)$.  

It is not hard to see that all the eigenvectors we have found are linearly independent, and clearly $P_+$ has degree $n+|S|$ and $P_-$ has degree $n$, so their product has degree $2n+|S| = N$, so we have found all the eigenvalues of $H$.
\end{proof}

We remark that, due to the form of the eigenvectors that we have found, we see that any graph with an involution has eigenvectors satisfying the strong cospectrality condition (condition 1 of Lemma \ref{lem:eig}).  Furthermore, the two different groups of eigenvalues from Lemma \ref{lem:eig}, the $\lambda_i$'s and the $\mu_j$'s, are the roots of $P_+$ and $P_-$ respectively.

Let us return to the example from Figure \ref{fig:invol}.  For this graph, we have 
\[
H_+ = \begin{bmatrix}
Q_1&2&0&0\\2&Q_2&1&0\\0&1&Q_3+1&1\\0&0&2&Q_4
\end{bmatrix} \text{ and } H_- = \begin{bmatrix}
Q_1&0&0\\0&Q_2&1\\0&1&Q_3-1
\end{bmatrix}.
\]
The characteristic polynomial (taking all the $Q_i$ to be 0 for simplicity of writing), is
\[
P(x) = x^7 - 9x^5-2x^4+19x^3+4x^2 - 8x = (x^4-x^3-7x^2+4x+8)(x^3+x^2-x)=P_+(x)P_-(x).
\]

We remark that Lemma 3.2 of \cite{us} shows explicitly the same factorization of the characteristic polynomial, and the corresponding $H_+$ and $H_-$ for the case of paths.

Note that in general, there is a slight ambiguity in defining $G'$ (and thus $H'$ and $A_\sigma$) in that we made an arbitrary choice for each pair $(x, \sigma x)$ of which of these to consider on which ``side" of the involution.  If we made a different choice, then it could change which non-fixed edges are in $G'$ and which go across the involution, changing whether an off-diagonal 1 shows up in $H'$ or $A_\sigma$. While this does not affect $H_+$, it may be the the sign of some of the off-diagonal entries of $H_-$ depend on this choice.  However, the spectrum remains unchanged, because, if we make a different choice for any particular pair, this changes the sign of the corresponding row and column in $H_-$.  This sign change can be achieved by a similarity of the matrix, so the spectrum is unaffected.  

\begin{lemma}\label{lem:invol_irred}
Let $G$ be a graph with a non-trivial involution $\sigma$ with fixed set $S$, and let us denote the number of vertices by $N=2n+|S|$ as before. Let us choose a potential $Q:V(G)\rightarrow\R$ that is symmetric with respect to $\sigma$.  Let us denote by $Q_1,...,Q_n$ the values of $Q$ that we are putting (symmetrically) on the non-fixed vertices of $G$, and $Q_{n+1},...,Q_{n+|S|}$ the values of $Q$ on the vertices in $S$.    Assume the following:
\begin{itemize}
\item The potential $Q$ is chosen so that $\bar{Q}=\sum_{i=1}^n Q_i$ is an irrational number, and $\hat{Q}=\sum_{i=1}^{|S|}Q_{n+i}$ is a number that is algebraically independent from $\bar Q$.  The number $\hat Q$ could be rational or irrational.
\item The polynomials $P_+$ and $P_-$ of Lemma \ref{lem:invol_factor} are irreducible over the base field $\Q(Q_1,...,Q_{n+|S|})$.
\end{itemize} 
If these two conditions are satisfied, then we have the following.
\begin{enumerate}
\item If $S\neq\emptyset$, then there is pretty good state transfer between $u$ and $\sigma u$ whenever $u\not\in S$.
\item If $S=\emptyset$ but there is at least one edge fixed by $\sigma$, then there is pretty good state transfer  between $u$ and $\sigma u$ for all $u$.
%\item If $S=\emptyset$ and there are no fixed edges, and $n$ is an even number, and if the splitting fields of $P_+$ and $P_-$ intersect only in the base field, then we have pretty good state transfer between $u$ and $\sigma u$ for all $u$.
\end{enumerate}
\end{lemma}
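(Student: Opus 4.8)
The plan is to verify the two conditions of Lemma~\ref{lem:eig} for the pair $u$, $v=\sigma u$ (when $S\neq\emptyset$ we take $u\notin S$, so that indeed $u\neq v$). Condition~1 is free: by Lemma~\ref{lem:invol_factor} every eigenvector of $H$ has the form $[a\ a\ b]^T$ or $[c\ {-c}\ 0]^T$, hence agrees up to sign on the two sides of the involution, and in particular on $\{u,v\}$. So all the work is in condition~2. Fix notation: $K=\Q(Q_1,\dots,Q_{n+|S|})$ is the base field, $F$ is the splitting field of $P_+P_-$ over $K$, $r=\deg P_+$ (so $r=n+|S|$, or $r=n$ when $S=\emptyset$), $s=\deg P_-=n$, and $c_+=\Tr H_+$, $c_-=\Tr H_-$, which are the sums of the roots of $P_+$ and of $P_-$. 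Since $H'$ and $H_S$ have zero diagonal apart from the potential, while the diagonal of $A_\sigma$ records exactly the $\sigma$-fixed edges (one $1$ each), a short computation gives $c_-=\bar Q-e$ and $c_+=\bar Q+\hat Q+e$, where $e\ge0$ is the number of fixed edges (the $\hat Q$ summand absent when $S=\emptyset$).

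The first substantive step is to pin down the eigenvalue sets appearing in condition~2. The $\lambda_i$ are exactly those roots of $P_+$ whose $H_+$-eigenvector has a nonzero $u$-coordinate, and the $\mu_j$ the analogous roots of $P_-$; I claim these are all the roots of $P_+$ and of $P_-$. Since $P_+$ is irreducible over $K$ it is separable, so $H_+$ has $r$ distinct eigenvalues and is diagonalizable, and for each root $\lambda$ the eigenvector may be taken with coordinates in $K(\lambda)$ (the kernel of $\lambda I-H_+$ is a line over $K(\lambda)$). If that eigenvector vanished at $u$, then applying an element of $\mathrm{Gal}(F/K)$ carrying $\lambda$ to another root $\lambda'$ of $P_+$ would, entrywise, send it to a nonzero $\lambda'$-eigenvector also vanishing at $u$; since the eigenvectors of $H_+$ span the coordinate space, not all of them can vanish at $u$, so none do. The same argument applies to $P_-$. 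Finally $P_+\neq P_-$: their degrees differ once $S\neq\emptyset$, and their traces $c_\pm$ differ once $e\ge1$; hence, being irreducible, $P_+$ and $P_-$ are coprime and share no root, so no eigenvalue carries simultaneously a non-vanishing $+$-type and a non-vanishing $-$-type eigenvector --- this rules out the obstruction noted in the remark following Lemma~\ref{lem:eig}.

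Now suppose integers $\ell_i$ (indexed by the roots of $P_+$) and $m_j$ (indexed by the roots of $P_-$) satisfy $\sum_i\ell_i\lambda_i+\sum_j m_j\mu_j=0$ and $L+M=0$, where $L=\sum_i\ell_i$ and $M=\sum_j m_j$; I must show $M$ is even. I would apply each $g\in\mathrm{Gal}(F/K)$ to the first relation (valid since $0\in K$ and the $\ell_i,m_j$ are integers) and sum over $g$. Because $P_+$ and $P_-$ are irreducible, $\mathrm{Gal}(F/K)$ acts transitively on the roots of each, so in this sum every root of $P_+$ occurs equally often, and likewise for $P_-$; the averaged relation therefore collapses to $\tfrac{L}{r}c_+ + \tfrac{M}{s}c_- = 0$, i.e.\ $sc_+L+rc_-M=0$. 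Substituting $M=-L$ gives $L\bigl(sc_+-rc_-\bigr)=0$.

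It remains to check that $sc_+\neq rc_-$. When $S\neq\emptyset$ one has $sc_+-rc_-=n(\bar Q+\hat Q+e)-(n+|S|)(\bar Q-e)=n\hat Q-|S|\bar Q+(2n+|S|)e$, which is nonzero: a vanishing would express $\bar Q$ as a rational-coefficient affine function of $\hat Q$, contradicting the assumed algebraic independence of $\bar Q$ and $\hat Q$ (note $n,|S|\ge1$). When $S=\emptyset$ and $\sigma$ fixes an edge, $r=s=n$ and $sc_+-rc_-=n(c_+-c_-)=2ne\neq0$. Either way $L=0$, hence $M=0$, so $M=\sum_j m_j$ is even; this is condition~2 of Lemma~\ref{lem:eig}, which together with condition~1 yields pretty good state transfer from $u$ to $\sigma u$. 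The main obstacle is the Galois argument of the second step: it is precisely what makes the \emph{full} root sets of $P_+$ and $P_-$ the relevant sets, so that transitivity of $\mathrm{Gal}(F/K)$ can be exploited; once that is in place, the rest is the trace bookkeeping plus Kronecker's theorem, which is already packaged inside Lemma~\ref{lem:eig}.
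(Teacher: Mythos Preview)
Your proof is correct and follows essentially the same route as the paper: both reduce to Lemma~\ref{lem:eig}, then average the relation $\sum\ell_i\lambda_i+\sum m_j\mu_j=0$ over the Galois group (the paper phrases this as a field-trace computation) to collapse it to a relation involving only $L,M$ and the traces $c_\pm=\Tr H_\pm$, and finally use the hypotheses on $\bar Q,\hat Q$ and $e$ (the paper's $k$) to force $L=M=0$. Your additions---the explicit Galois argument that no eigenvector of $H_\pm$ vanishes at $u$, and the observation that $P_+\neq P_-$ hence they are coprime---are correct but not strictly needed: the trace identity holds for any subset of the roots since $\Tr_{F/K}(\lambda_i)$ depends only on the minimal polynomial of $\lambda_i$, so verifying condition~2 for the full root sets already suffices.
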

\begin{proof}
Suppose we have integers $\ell_i$, $m_j$ satisfying \begin{align*}
\sum_i \ell_i\lambda_i +\sum_j m_j\mu_j = 0\\
\sum_i\ell_i +\sum_j m_j =0.
\end{align*}
To use Lemma \ref{lem:eig}, we wish to show 
$\sum_i \ell_i $
is even.

We will use a tool from Galois theory called the \emph{field trace} of a field extension.  For a Galois field extension $K$ of $F$, we define $\Tr_{K/F}: K \rightarrow F$ by 
\[
\Tr_{K/F}(\alpha) = \sum_{g\in Gal(K/F)}g(\alpha).
\]  The field trace is the trace of the linear map taking $x\mapsto \alpha x$.  We record here a few basic facts about the field trace that will be useful.
\begin{itemize}
\item $\Tr_{K/F}$ is a linear map.
\item For $\alpha\in F$, $\Tr_{K/F}(\alpha) = [K:F]\alpha$.
\item For $K$ an extension of $L$, and extension of $F$, we have $\Tr_{K/F} = \Tr_{L/F}\circ\Tr_{K/L}$.
\end{itemize}

Now, set $F=\Q(Q_1,...,Q_{n+|S|})$, let $L/F$ be the splitting field for $P_+$, $M/F$ the splitting field for $P_-$, and $K/F$ the smallest field extension containing both $L$ and $M$.  Since $P_+$ and $P_-$ are irreducible, then $L$ and $M$ are Galois extensions of $F$. We will examine the field trace of the individual roots of $P_+$ and $P_-$.  Let $\bar Q = \sum_{i=1}^n Q_i$ and let $k$ be the number of edges fixed by $\sigma$.  Observe that we have the traces of the matrices, $\Tr(H_+) = \bar Q +\hat Q+ k$ and $\Tr(H_-) = \bar Q - k$.  We have 
\[
\Tr_{L/F}(\lambda_i) = \sum_{g\in Gal(K/F)}g(\lambda_i)
\]
and since $L$ is a Galois extension, the group acts transitively on the roots of $P_+$, so each of the $\lambda_j$'s shows in this sum, and each will appear $|Gal(L/F)|/deg(P_+)$ times.  Thus
\[
\Tr_{L/F}(\lambda_i) = \frac{[L:F]}{n+|S|}\sum_j\lambda_j.
\]
Note further that $\sum_j\lambda_j$ is given by the trace of $H_+$ which we observed was $\bar Q + \hat Q+k$, so we have shown
\[
\Tr_{L/F}(\lambda_i) = \frac{[L:F]}{n+|S|}(\bar Q+\hat Q+k)
\]
for any $i$, In a similar way, by examining $P_-$ we obtain
\[
\Tr_{M/F}(\mu_j) = \frac{[M:F]}{n}(\bar Q-k)
\]
for any $j$.  

Now apply the field trace to our linear combination of the $\lambda_i$ and $\mu_j$, and using the properties above we have,
\begin{equation}\label{eq:fieldtrace}
\begin{array}{ll}
0&=\Tr_{K/F}\left(\sum\ell_i\lambda_i+\sum m_j\mu_j\right)\\ &= \Tr_{K/F}\left(\sum\ell_i\lambda_i\right)+\Tr_{K/F}\left(\sum m_j\mu_j\right)\\
&= [K:L]\Tr_{L/F}\left(\sum\ell_i\lambda_i\right)+[K:M]\Tr_{M/F}\left(\sum m_j\mu_j\right)\\
&=[K:L]\sum \ell_i\Tr_{L/F}(\lambda_i) + [K:M]\sum m_j\Tr_{M/F}(\mu_j)\\
&=\frac{[K:L][L:F]}{n+|S|}(\bar Q+\hat Q+k)\sum\ell_i + \frac{[K:M][M:F]}{n}(\bar Q-k)\sum m_j\\
&=[K:F]\left(\frac{\bar Q+\hat Q+k}{n+|S|}\sum\ell_i + \frac{\bar Q-k}{n}\sum m_j\right).
\end{array}
\end{equation}

Using this setup, we can apply these ideas to each case in the statement of the lemma.

\begin{enumerate}

\item Assume $S\neq\emptyset$, so $|S|>0$. Then (\ref{eq:fieldtrace}) implies that
\[
n(\bar Q+\hat Q+k)\sum\ell_i + (n+|S|)(\bar Q-k)\sum m_j = 0.
\]
Since $\bar Q$ is irrational and algebraically independent from $\hat Q$ and everything else is rational, then the coefficient in front of $\bar Q$ must be 0.  This implies that
\[
n\sum\ell_i + (n+|S|)\sum m_j =0
\]
which in turn implies that
\[
n\left(\sum\ell_i + \sum m_j\right) + |S|\sum m_j = 0.
\]
By assumption, we have that $\sum\ell_i + \sum m_j=0$, so since $|S|\neq0$, this gives $\sum m_j=0$, which further gives $\sum\ell_i=0$.  In particular, each sum is even, so Lemma \ref{lem:eig} implies that we get pretty good state transfer.  This completes the case that $S\neq\emptyset$.

\item Assume $S=\emptyset$ (implying, in particular, that $|S|=0$ and $\hat Q=0$) and $k\neq0$.  Then (\ref{eq:fieldtrace}) implies that 
\[
\left(\bar Q\left(\sum\ell_i+\sum m_j\right) +k\left(\sum\ell_i-\sum m_j \right)\right)
\]
By assumption, $\sum\ell_i+\sum m_j=0$, so this gives that $\sum\ell_i - \sum m_j = 0$, which, together with $\sum\ell_i +\sum m_j = 0$, gives that
\[
\sum\ell_i = \sum m_j =0.
\]
In particular, the sum is even, as desired.  This completes the proof.
\Hidden{%%%%%%%%%% Hidden %%%%%%%%%%%%
\item Assume that $S=\emptyset$, $k=0$, $n$ is even, and that $L\cap M= F$.  From above, we have
\[
\Tr_{L/F}\left(\sum\ell_i\lambda_i\right) = [L:F]\frac{\bar Q}{n}\sum\ell_i.
\]
On the other hand, under the assumption that $\sum\ell_i\lambda_i + \sum m_i \mu_i = 0$, we see that $\sum \ell_i\lambda_i  = -\sum m_i\mu_i\in M$.  So $\sum \ell_i\lambda_i \in L\cap M = F$.  Therefore, we know that
\[
\Tr_{L/F}\left(\sum\ell_i\lambda_i\right) = [L:F]\sum\ell_i\lambda_i.
\]
Thus, we can conclude that 
\[
\sum \ell_i\lambda_i = \frac{\sum\ell_i}{n}\bar Q.
\]
But clearly $\sum \ell_i\lambda_i$ is an algebraic integer over the field $F = \Q(Q_1,...,Q_n)$, therefore $\bar Q\sum\ell_i/n$ is as well.  This implies that $\sum\ell_i$ is divisible by $n$, which is an even number.  So $\sum\ell_i$ is even, which implies $\sum m_i$ is even, so we get pretty good state transfer by Lemma \ref{lem:eig}.  This completes the lemma.
}%%%%%%%%%%%% Un-Hidden %%%%%%%%%%%%%%%%%%%
\end{enumerate}
\end{proof}

With a little bit of work, if we add a more strict condition on the potential, then we can in fact reduce the requirement that both $P_+$ and $P_-$ are irreducible, to requiring that just $P_+$ be irreducible.  We present this in the following lemma.

\begin{lemma}\label{lem:one_irred}
Let $G$ be a graph with an involution $\sigma$, and assume all of the notation that we have been using previously.  Let $Q_1,...,Q_n,Q_{n+1},...,Q_{n+|S|}$ be values of the potential that are all algebraically independent transcendental numbers.  Assume the $P_+$ is an irreducible polynomial.  Then we have:
\begin{enumerate}
\item If $S\neq\emptyset$, then there is pretty good state transfer between $u$ and $\sigma u$ for $u\not\in S$.
\item If $S=\emptyset$ and there is at least one edge fixed by $\sigma$, then there is pretty good state transfer between $u$ and $\sigma u$ for all $u$.
\end{enumerate}
\end{lemma}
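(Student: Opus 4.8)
The plan is to follow the field‑trace argument in the proof of Lemma~\ref{lem:invol_irred}; the only new wrinkle is that now $P_-$ need not be irreducible, so the Galois group acts transitively on the roots of $P_+$ but possibly not on those of $P_-$. As there, I verify condition~$2$ of Lemma~\ref{lem:eig}: given integers $\ell_i$ indexed by the roots $\lambda_i$ of $P_+$ and $m_j$ indexed by the roots $\mu_j$ of $P_-$ (extending to all roots by setting the coefficient to zero for eigenvectors vanishing at $u$) with $\sum_i\ell_i\lambda_i+\sum_j m_j\mu_j=0$ and $\sum_i\ell_i+\sum_j m_j=0$, I will prove the stronger statement $\sum_i\ell_i=0$ (whence $\sum_j m_j=0$ too), which is in particular even. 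Since we are in characteristic zero and $P_+$ is irreducible, $P_+$ and $P_-$ are separable, so the $\lambda_i$ are distinct and the $\mu_j$ are distinct. Put $F=\Q(Q_1,\dots,Q_{n+|S|})$, let $L,M$ be the splitting fields of $P_+,P_-$ over $F$ and $K=LM$; all are Galois over $F$.

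The first step is to apply $\Tr_{K/F}$ to the relation $\sum_i\ell_i\lambda_i+\sum_j m_j\mu_j=0$. Just as in Lemma~\ref{lem:invol_irred}, irreducibility of $P_+$ gives $\Tr_{K/F}(\lambda_i)=\frac{[K:F]}{n+|S|}\Tr(H_+)=\frac{[K:F]}{n+|S|}(\bar Q+\hat Q+k)$ for every $i$. For a root $\mu_j$ of $P_-$, let $f_j$ be the monic irreducible factor of $P_-$ over $F$ vanishing at $\mu_j$, set $d_j=\deg f_j$, and let $s_j$ be the sum of the roots of $f_j$; transitivity of the Galois action on the roots of the irreducible $f_j$ gives, by the same orbit count, $\Tr_{K/F}(\mu_j)=\frac{[K:F]}{d_j}s_j$. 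By Gauss's lemma $f_j\in\Q[Q_1,\dots,Q_n][x]$, so $s_j\in\Q[Q_1,\dots,Q_n]$; in particular the potentials $Q_{n+1},\dots,Q_{n+|S|}$ on the fixed set do not appear in any $s_j$, since $H_-=H'-A_\sigma$ involves only $Q_1,\dots,Q_n$. Dividing the traced relation by $[K:F]$ gives
\[
\frac{\bar Q+\hat Q+k}{n+|S|}\sum_i\ell_i+\sum_j\frac{m_j s_j}{d_j}=0,
\]
and clearing the positive‑integer denominators, the algebraic independence of $Q_1,\dots,Q_{n+|S|}$ turns this into the assertion that a polynomial in $\Q[Q_1,\dots,Q_{n+|S|}]$ vanishes identically.

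In case~$(1)$, $S\ne\emptyset$, the variable $Q_{n+1}$ occurs in $\bar Q+\hat Q+k$ with coefficient $1$ but in none of the $s_j$, so reading off the coefficient of $Q_{n+1}$ in this zero polynomial forces $\sum_i\ell_i=0$, and Lemma~\ref{lem:eig} finishes the case. In case~$(2)$, $S=\emptyset$ (so $\hat Q=0$, $|S|=0$) and $k\ge1$, and there is no spare variable, so I need the precise shape of the $s_j$. The key sub‑claim is: grading $\Q[x,Q_1,\dots,Q_n]$ by $\deg x=\deg Q_i=1$, every monic irreducible factor $f$ of $P_-$ over $\Q[Q_1,\dots,Q_n]$ has top‑graded part $\prod_{i\in T_f}(x-Q_i)$ for a set $T_f$ with $|T_f|=\deg f$, and the $T_f$ partition $\{1,\dots,n\}$. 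This holds because the top‑graded part of $P_-(x)=\det\!\big((xI-\mathrm{diag}(Q))-(A'-A_\sigma)\big)$ is $\prod_{i=1}^n(x-Q_i)$ (only the identity permutation attains grading $n$ in the determinant expansion); ``take the top‑graded part'' is multiplicative on the domain $\Q[x,Q_1,\dots,Q_n]$; and $\prod_{i=1}^n(x-Q_i)$ is the prime factorization of that element, the $x-Q_i$ being pairwise non‑associate irreducibles. Matching $x$‑degrees then pins down $|T_f|=\deg f$ and forces the leading coefficient of the top‑graded part of each $f$ to be $1$; consequently the degree‑$1$ homogeneous part of $s_j$ is exactly $\sum_{i\in T_{f_j}}Q_i$.

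Granting the sub‑claim, I group the roots by irreducible factor: for each factor $f$ put $M_f=\sum_{\mu_j\text{ a root of }f}m_j$, so that $\sum_j\frac{m_j s_j}{d_j}=\sum_f\frac{M_f s_f}{d_f}$ where $s_f,d_f$ are the root‑sum and degree of $f$. Extracting the degree‑$1$ part of the zero polynomial above (its linear term being just $\bar Q=\sum_{i=1}^nQ_i$, as $\hat Q=0$) and comparing the coefficient of each $Q_i$ — which lies in exactly one block $T_f$ — yields $\frac1n\sum_i\ell_i+\frac{M_f}{d_f}=0$, so $M_f/d_f$ equals the single constant $c:=-\frac1n\sum_i\ell_i$ for every factor $f$. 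Substituting $M_f=c\,d_f$ back into the full identity and using $\sum_f s_f=\Tr(H_-)=\bar Q-k$ collapses it to $c\big((\bar Q-k)-(\bar Q+k)\big)=-2ck=0$; since $k\ne0$ this forces $c=0$, i.e.\ $\sum_i\ell_i=0$, and Lemma~\ref{lem:eig} gives pretty good state transfer between $u$ and $\sigma u$. The step I expect to be the real obstacle is the grading/unique‑factorization sub‑claim isolating the linear parts of the $s_j$ — equivalently, the linear independence over $\Q$ of those linear forms in $Q_1,\dots,Q_n$; once it is available, the rest is the trace bookkeeping of Lemma~\ref{lem:invol_irred} together with the algebraic independence of the potential.
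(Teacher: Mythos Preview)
Your proof is correct and follows essentially the same route as the paper's: take the field trace of the linear relation, then in case~(1) isolate the coefficient of a fixed-vertex variable $Q_{n+1}$ in $\hat Q$ to force $\sum_i\ell_i=0$, and in case~(2) show that the $Q_i$ partition among the irreducible factors of $P_-$ so that the root-sum $s_f$ of each factor has linear part $\sum_{i\in T_f}Q_i$, compare coefficients of each $Q_i$ to get $M_f/d_f$ constant, and substitute back to obtain $-2ck=0$. The only cosmetic difference is your justification of the partition claim via the top-graded part of $P_-$ and unique factorization, where the paper argues more tersely that a $Q_j$ appearing in two distinct factors would produce a $Q_j^2$ term in $P_-$; these are the same observation, yours being the cleaner formulation (and incidentally showing $P_-$ is squarefree, which rescues your early claim that the $\mu_j$ are distinct---separability alone does not give this for a possibly reducible $P_-$).
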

\begin{proof}
\begin{enumerate}
\item[]
\item When $S\neq\emptyset$, then from reasoning similar to (\ref{eq:fieldtrace}) in Lemma \ref{lem:invol_irred}, since we are assuming $P_+$ is irreducible, we can see that
\[
\frac{[K:F]}{n+|S|}(\bar Q + \hat Q + k)\sum\ell_i + \Tr_{K/F}\left(\sum m_j\mu_j\right) = 0.
\]
Since we have chosen all the $Q_i$'s to be algebraically independent transcendental numbers, then it is clear that $\hat Q$ is algebraically independent from everything else in this expression.  Thus, for this to be 0, we need the coefficient in front of $\hat Q$ to be 0.  In particular, we see that $\sum\ell_i = 0$, and we are done by Lemma \ref{lem:eig}.
\item Let us assume that $P_+$ is irreducible, and assume that $P_-$ factors into irreducible factors
\[
P_-(x) = P_{I_1}(x)P_{I_2}(x)...P_{I_r}(x)
\]
for some irreducible polynomials $P_{I_1},...,P_{I_r}$.  Here $I_1,...,I_r$ can be taken to be index sets that partition ${1,...,n}$, since $deg P_- = n$.  Since the coefficient of $x^{n-1}$ in $P_-$ is $\sum_{i=1}^n Q_i - k$, then we can see that in $P_{I_t}$, the coefficient of $x^{deg P_{I_t} - 1}$ is $\sum_{i\in I_t} Q_i + \alpha_t$ for some rational $\alpha_t$. Indeed, if some $Q_j$ were to show up in two distinct factors, then on multiplying them out, there would be some coefficient in $P_-$ that would involve a higher power of $Q_j$, which is impossible.  Note also that $\sum_{t=1}^r \alpha_t = -k$.

Let us now apply the field trace argument from Lemma \ref{lem:invol_irred} in this case.  Equation (\ref{eq:fieldtrace}) now becomes
\begin{align*}
0&=\frac{[K:F]}{n}\left(\sum_{i=1}^n Q_i + k\right)\sum\ell_i + [K:M]\sum_{t=1}^k\left(\sum_{j\in I_t}m_j\frac{[M:F]}{deq P_{I_t}}\left(\sum_{s\in I_t}Q_s + \alpha_t\right) \right)\\
&=\frac{[K:F]}{n}\left(\sum_{i=1}^n Q_i + k\right)\sum\ell_i+[K:F]\sum_{t=1}^k\left(\sum_{s\in I_t}Q_s + \alpha_t\right)\left(\frac{\sum_{j\in I_t}m_j}{|I_t|}\right).
\end{align*}
Now, the $Q_i$'s were chosen to be algebraically independent transcendental numbers, and as explained above, each appears exactly once in each of the two terms of the sum above.  Thus, the coefficient of $Q_i$ must be 0, so we obtain that 
\[
\frac{\sum_{i=1}^n\ell_i}{n} = -\frac{\sum_{j\in I_t}m_j}{|I_t|}
\]
for every $t$.  Replacing this in the above, we see that
\[
\frac{\sum\ell_i}{n}\left(\sum_{i=1}^n Q_i + k - \sum_{t=1}^k\left(\sum_{s\in I_t}Q_s + \alpha_t\right)\right) = 0.
\]
We saw that each $Q_i$ appears once in the factorization, so all the $Q_i$'s cancel, and $\sum\alpha_t = -k$, so we have $2k\sum\ell_i/n = 0$.  In particular, since $k$, the number of fixed edges of the involution, is non-zero, we get $\sum\ell_i = 0$, and we are done by Lemma \ref{lem:eig}.
\end{enumerate}
\end{proof}

A natural question to ask now is when the conditions of Lemmas \ref{lem:invol_irred} and \ref{lem:one_irred} can be satisfied.  This is the subject of what follows.

\begin{lemma}\label{lem:irred}
Let $G$ be a connected graph with an involution $\sigma$, and let $Q_1,...,Q_{n+|S|}$ be values of potential that we put on the vertices of $G$ symmetrically across $\sigma$ as before.  If $Q_1,...,Q_{n+|S|}$ are chosen to be algebraically independent transcendental numbers, then $P_+$ is irreducible over the field $\Q(Q_1,....,Q_{n+|S|})$.  
\end{lemma}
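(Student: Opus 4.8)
The plan is to reduce the assertion to an irreducibility statement over a polynomial ring and then to exploit the multilinearity of the determinant together with the connectedness of $G$. \emph{Reduction to a polynomial ring.} Since $Q_1,\dots,Q_{n+|S|}$ are algebraically independent transcendentals over $\Q$, the field $\Q(Q_1,\dots,Q_{n+|S|})$ is $\Q$-isomorphic to the rational function field $\Q(t_1,\dots,t_{n+|S|})$, and irreducibility of $P_+$ is invariant under this isomorphism; so it suffices to treat the case in which the $Q_i$ are independent indeterminates $t_i$. The diagonal entries of $H_+$ have the form $t_i+c_i$ for integers $c_i$ (the $c_i$ coming from the diagonal of $A_\sigma$, i.e.\ from fixed edges, and being $0$ on the $S$-block), so after the invertible substitution $s_i=t_i+c_i$, which does not change the generated field, we may assume $H_+=D+B$ with $D=\mathrm{diag}(s_1,\dots,s_m)$, $m=n+|S|$, and $B$ a fixed integer matrix with zero diagonal. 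Two features of $B$ will be used: all its entries are nonnegative, and its off-diagonal zero pattern is symmetric (the block $2A_S^{T}$ has the same support as $A_S^{T}$); in particular the support graph $\Gamma$ of $B$ on $m$ vertices is connected, since sending a vertex of $G$ to its $\sigma$-orbit gives a surjection from $G$ onto $\Gamma$ carrying edges to edges or loops, so connectedness of $G$ descends. Finally $P_+(x)=\det(xI-D-B)$ is monic in $x$, hence primitive over the UFD $\Q[s_1,\dots,s_m]$, so by Gauss's lemma it is enough to show $P_+$ is irreducible in the polynomial ring $\Q[s_1,\dots,s_m,x]$.

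\emph{A hypothetical factorization and a cross-edge.} Suppose $P_+=fg$ with $f,g$ non-units. Because the determinant is multilinear in the rows, $\deg_{s_i}P_+\le 1$ for each $i$, and equality holds since the coefficient of $x^{m-1}$ is $-\sum_i(s_i+b_{ii})$; hence for every $i$ exactly one of $f,g$ has positive degree in $s_i$, which partitions $\{1,\dots,m\}=I_f\sqcup I_g$. Neither part is empty: if, say, $f$ lay in $\Q[x]$ and were nonconstant it would have a root $\alpha$ in an algebraic closure, so $\det(\alpha I-D-B)$ would vanish identically as a polynomial in $s_1,\dots,s_m$, contradicting the fact that this polynomial contains the monomial $(-1)^m s_1\cdots s_m$ with nonzero coefficient. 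Since $\Gamma$ is connected and $I_f,I_g$ are nonempty, there is an edge of $\Gamma$ joining some $i\in I_f$ to some $j\in I_g$.

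\emph{The contradiction.} From $\deg_{s_i}P_+=\deg_{s_j}P_+=1$ we get that $f$ is affine-linear in $s_i$ and free of $s_j$, while $g$ is affine-linear in $s_j$ and free of $s_i$. Put $M=xI-D-B$; for $T\subseteq\{1,\dots,m\}$ let $M_T$ be the principal submatrix of $M$ on the complement of $T$ (so $M_\emptyset=M$), and for $i\ne j$ let $M^i_j$ be $M$ with row $i$ and column $j$ deleted. Since $s_i$ occurs in $M$ only in the $(i,i)$ entry, with coefficient $-1$, differentiating determinants gives $\partial_{s_i}P_+=-\det M_{\{i\}}$, $\partial_{s_j}P_+=-\det M_{\{j\}}$ and $\partial_{s_i}\partial_{s_j}P_+=\det M_{\{i,j\}}$; from $P_+=fg$ with the stated independences one gets $\partial_{s_i}P_+=(\partial_{s_i}f)g$, $\partial_{s_j}P_+=f(\partial_{s_j}g)$ and $\partial_{s_i}\partial_{s_j}P_+=(\partial_{s_i}f)(\partial_{s_j}g)$, whence $(\partial_{s_i}P_+)(\partial_{s_j}P_+)=(\partial_{s_i}\partial_{s_j}P_+)\,P_+$. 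Spelling this out,
\[
\det M_{\{i\}}\cdot\det M_{\{j\}}=\det M_{\{i,j\}}\cdot\det M .
\]
On the other hand the Desnanot--Jacobi identity applied to $M$ and the pair of indices $i,j$ states
\[
\det M\cdot\det M_{\{i,j\}}=\det M_{\{i\}}\cdot\det M_{\{j\}}-\det M^i_j\cdot\det M^j_i ,
\]
so the two displays force $\det M^i_j\cdot\det M^j_i=0$. This is impossible: since $i\sim j$ in $\Gamma$ and $B$ has symmetric support, $b_{ji}\ne 0$, and in the Leibniz expansion of $\det M^i_j$ the unique permutation using the maximal number ($m-2$) of diagonal entries of $M$ is the one sending $j\mapsto i$ and fixing every other index, so the coefficient of $x^{m-2}$ in $\det M^i_j$ is $\pm b_{ji}\ne 0$; hence $\det M^i_j\ne 0$, and symmetrically $\det M^j_i\ne 0$. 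This contradicts $\det M^i_j\cdot\det M^j_i=0$, so $P_+$ admits no nontrivial factorization and is therefore irreducible.

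The step I expect to be the main obstacle is the last one: showing $\det M^i_j\ne 0$ and $\det M^j_i\ne 0$ whenever $i\sim j$ (equivalently, that the tempting identity $\det M_{\{i\}}\det M_{\{j\}}=\det M\cdot\det M_{\{i,j\}}$ must fail). This is precisely where connectedness of $G$ enters essentially — it is what produces the cross-edge between $I_f$ and $I_g$ — and it requires a small but genuine look at the permutation expansion of the determinant, together with the observation that $B$ is a \emph{nonnegative} integer matrix, so that no cancellation can make $b_{ij}$ or $b_{ji}$ vanish when an edge is present.
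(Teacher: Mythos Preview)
Your proof is correct, and the opening moves match the paper's: pass to the polynomial ring via Gauss's lemma, observe that $P_+$ has degree exactly one in each $Q_i$, deduce that a putative factorization $P_+=fg$ partitions the index set into two nonempty pieces, and then exploit connectedness of the quotient graph underlying $H_+$. Where you diverge is at the contradiction. The paper does not look for an adjacent cross pair; instead it picks arbitrary $i\in I_f$, $j\in I_g$, specializes all $Q_i$ to $0$, and notes that then every root of $p_\emptyset=\det(xI-H_+|_{Q=0})$ is a root of $p_{\{i\}}$ or of $p_{\{j\}}$. Cauchy interlacing (using that $H_+$ is diagonally similar to a symmetric matrix) forces the corresponding eigenvector to vanish at $i$ or $j$, and Perron--Frobenius, applied to the nonnegative irreducible matrix $H_+|_{Q=0}$, produces a strictly positive eigenvector that cannot vanish anywhere---the contradiction. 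Your route instead selects a genuine cross-edge $i\sim j$, derives the identity $\det M_{\{i\}}\det M_{\{j\}}=\det M\cdot\det M_{\{i,j\}}$ from the factorization, and kills it with Desnanot--Jacobi and a leading-coefficient computation. Your argument is more purely algebraic: it never specializes the potential, avoids interlacing and Perron--Frobenius entirely, and in fact only uses that the off-diagonal support of $H_+$ is symmetric and connected (nonnegativity enters for you only to rule out cancellation when forming the quotient graph, exactly the point at which $P_-$ can fail). The paper's argument is shorter and leans on familiar spectral facts, but tacitly uses that $H_+$ is symmetrizable; yours trades that for a determinant identity and works uniformly over the ring.
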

\begin{proof}
First, note that if $G$ is connected, then $H_+$ corresponds to a connected graph.

Let us suppose that $P_+$ factors into two non-trivial factors as
\[
P_+ = R\cdot \tilde R.
\]
over the field $\mathcal{F} = \Q(Q_1,\dots,Q_{n+|S|})$. The field $\mathcal{F}$ is isomorphic to the quotient field of the formal polynomial ring $\mathcal{R} = \Q[Q_1,\dots,Q_{n+|S|}]$. Since this is a unique factorization domain, by the Gauss Lemma we can assume that $R,\tilde R \in \mathcal{R}$. Let us write 
\[
R = \sum_{T\subset I}\left(\prod_{i\in T}Q_i\right)r_T ~~~\text{ and } ~~~\tilde R = \sum_{\tilde T\subset \tilde I}\left(\prod_{i\in \tilde T}Q_i\right)r_{\tilde T}
\]
where $I$ and $\tilde I$ are disjoint subsets of the index set with $I\cup \tilde I = [n+|S|]$, and $r_T$ and $r_{\tilde T}$ are polynomials depending on $T$ and $\tilde T$.  

First we will argue that if there is such a factorization, then neither $I$ nor $\tilde I$ is empty.  Without loss of generality, let us suppose that $\tilde I = \emptyset$ (this means that the only terms involving any of the $Q_i$'s are in $R$ so that $\tilde R$ is a rational polynomial).  This means that the only term showing up in $\tilde R$ is $\tilde r_\emptyset$.  If we write 
\[
P_+ = \sum_{U\subset [n]}\left(\prod_{i\in U} Q_i\right)p_U
\]
then we see that $\tilde r_{\emptyset}$ divides $p_U$ for all $U$.  In particular, $\tilde r_{\emptyset}|p_{[n]}$, but since each $Q_i$ is transcendental (in particular, each $Q_i$ is nonzero), it is clear that $p_{[n]} = 1$.  This is a contradiction, so we see that if $P_+$ factors, then both $I$ and $\tilde I$ are nonempty.  
%The same argument shows that the same is true for $P_-$.  

Now, let us take $i\in I$, $j\in \tilde I$. Looking at the factorization $P_+ = R\cdot \tilde R$, since the $Q_i$'s are algebraically independent transcendentals, it is clear that 
\begin{align*}
p_\emptyset &= r_{\emptyset}\tilde r_{\emptyset}\\
%p_{\tilde T} &= r_{\emptyset}\tilde r_{\tilde T}\\
p_{\{i\}} &= r_{\{i\}}\tilde r_{\emptyset}\\
p_{\{j\}} &= r_{\emptyset}\tilde r_{\{j\}}
\end{align*}
Now, $p_\emptyset$ is the characteristic polynomial of the matrix $H_+$ if all the $Q_i$ were taken to be 0 and $p_{\{i\}}$ and $p_{\{j\}}$ are the characteristic polynomials of the principal submatrices of this matrix obtained from deleting row and column $i$ and $j$ respectively.  We see from the above system that any root of $p_\emptyset$ is either a root of $p_{\{i\}}$ or $p_{\{j\}}$.  Thus any eigenvalue of this matrix is also an eigenvalue of one of its principal submatrices.  We know however that the eigenvalues interlace, so we see that the interlacing is not strict.  This implies that the associated eigenvector is 0 on the entry corresponding to the deleted row and column.  Thus, we have seen that every eigenvector has some entry equal to 0.  However, since the matrix corresponding to $P_+$ is non-negative, and its corresponding graph is connected, then the Perron-Frobenius theorem tells us that there is an eigenvector whose entries are all positive.  This is a contradiction, and thus $P_+$ is irreducible.
\Hidden{%%%%%%%%%%%%%% Hidden %%%%%%%%%%%%%%%%%%%
For $P_-$, there are possibly some negative entries (corresponding to edges that have endpoints on either side of the involution), so the Perron-Frobenius argument will not work.  However, we can use a more general argument. Note that for any $S\subset [n]$, we have that 
\[
p_T = r_{I\cap T}\tilde r_{\tilde I \cap T}
\]
so that in particular, 
\begin{align*}
p_I &= r_I\tilde r_\emptyset\\
p_{\tilde I} &= \tilde r_{\tilde I}r_\emptyset.
\end{align*}
This implies that
\[
p_Ip_{\tilde I} = r_I\tilde r_{\tilde I}r_\emptyset\tilde r_\emptyset = p_{[n]}p_\emptyset = p_\emptyset
\]
since, again, clearly $p_{[n]}=1$.  But as before, $p_\emptyset$ is the characteristic polynomial of the matrix where the potential is taken to be 0.  Also, note that $p_I$ and $p_{\tilde I}$ are characteristic polynomials of disjoint proper principal submatrices of the same matrix.  Thus we have shown that this characteristic polynomial factors as the product of the characteristic polynomials of two proper principal submatrices.  We claim that this implies that the matrix itself is the direct sum of these two principal submatrices, which in turn implies that the graph in question is disconnected. 
}%%%%%%%%%%%%%%%%%% Un-Hidden %%%%%%%%%%%%%%%%%%%%%%
\end{proof}

From Lemmas \ref{lem:one_irred} and \ref{lem:irred}, the proof of Theorem \ref{thm:PGSTinvol} follows immediately.  We have shown that for any graph with an involution that fixes something, we can choose a potential so that we get pretty good state transfer across the involution.

\Hidden{ %%%%%%%%%%%%%%%% Hidden %%%%%%%%%%%%%%%
\begin{corollary}\label{cor:general_pot}
Let $G$ be a connected graph with an involution.  Let $Q_1,...,Q_{n+|S|}$ be algebraically independent transcendental numbers that we put as a potential on the vertices of $G$ (symmetrically on each side of the involution).  Let $S$ be the set of vertices fixed by $\sigma$, and let $k$ be the number of edges fixed by $\sigma$.  Then we have the following cases:
\begin{enumerate}
\item Is $S \neq \emptyset$, then we have pretty good state transfer across the involution.
\item If $S=\emptyset$ but $k\neq0$, then we have pretty good state transfer across the involution.
\item If $S = \emptyset$ and $k=0$, and there is an even number of vertices on each side of the involution ($n$ is even), then if $P_+$ and $P_-$ have splitting fields whose intersection is only the base field $\Q(Q_1,...,Q_n)$, then we have pretty good state transfer across the involution.

\end{enumerate}
\end{corollary}

The only case remaining to complete the proof of Theorem \ref{thm:PGSTinvol} is if $n$ is odd, and there are no fixed vertices or edges.  Surprisingly, this case has a different result from all other cases that we have covered so far.

\begin{theorem}\label{thm:odd}
If $G$ is a graph with an involution that has no fixed vertices of edges, and there is an odd number of vertices on each side of the involution, ($S=\emptyset$, $k=0$, and $n$ is odd in the notation above),
then there is not pretty good state transfer in $G$, for any general symmetric potential $Q$.
\end{theorem}
\begin{proof}
Let $Q_1,...,Q_n$ be the values of the potential on either side of the involution.  Since there are no fixed edges, then we have $\Tr(H_+) = \Tr(H_-) = \sum Q_i$, and since there are no fixed vertices, then the number of $\lambda_i$'s is the same as the number of $\mu_i$'s.  Thus, we can simply choose $\ell_i = 1$ for all $i$, and $\mu_i = -1$ for all $i$.  Then clearly we have $\sum\ell_i + \sum m_i = 0$, and
\[
\sum \ell_i\lambda_i + \sum m_i\mu_i = \sum Q_i - \sum Q_i = 0,
\]
but $\sum m_i$ and $\sum\ell_i $ are both odd, so we do not get pretty good state transfer, by Lemma \ref{lem:eig}.
\end{proof}

We have completed the proof of Theorem \ref{thm:PGSTinvol}.

The only case that we have not covered is when, in the absence of fixed vertices or edges, there is an even number of vertices on each side of the involution, and the splitting fields of the two factors have some larger overlap.  At this point, we are unable to say anything either way about this case.  
}%%%%%%%%%%%%%% Un-Hidden %%%%%%%%%%%%%%%%%%%%%%%%

A natural question to ask at this point is how simple of a potential we can use to obtain pretty good state transfer, or if we need the full generic potential on all the vertices.  In particular, if we have a graph with an involution, it is of interest to determine if there is a single value that we can place on a symmetric pair of vertices (and a potential of 0 elsewhere) to induce pretty good state transfer between those vertices.

Let us thus consider this setup.  Let $Q:V(G)\rightarrow \R$ satisfy $Q(x) = 0$ unless $x=u$ or $x=u'$ for some fixed symmetric pair $u,u'$.  By abuse of notation, we will simply say $Q(u) =Q(u') = Q$.  Note then that we can write
\[
P_+(x) = p_1(x) - Qq_1(x) \text{ and } P_-(x) = p_2(x) - Qq_2(x)
\]
for some polynomials $p_1,q_1,p_2,q_2$.  Further more $p_1,p_2$ are, respectively, the characteristic polynomials of $H_+$ and $H_-$ where we take $Q=0$, and $q_1,q_2$ are the characteristic polynomials of these matrices with the row and column corresponding to $u$ deleted.

Then we have the following special case of Lemma \ref{lem:invol_irred}.

\begin{corollary}\label{cor:invol_single}
Let $G$ be a graph with an involution and $u,u'$ non-fixed vertices that are symmetric, and put a potential $Q$ only on $u$ and $u'$.  Let $p_1,p_2,q_1,q_2$ be as above.
Suppose that $(p_1,q_1)=1$ and that $(p_2,q_2)=1$ and choose $Q$ to be transcendental. Let $S$ be the set of vertices fixed by $\sigma$, and let $k$ be the number of edges fixed by $\sigma$.  Then if $\sigma$ fixes at least one vertex or at least one edge, then there is pretty good state transfer from $u$ to $u'$.
\end{corollary}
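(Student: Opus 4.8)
The plan is to recognize this single-potential situation as an instance of Lemma \ref{lem:invol_irred} and then quote that lemma. First I would unwind the notation. Placing $Q$ only on $u$ and $u'=\sigma u$ means that among the values $Q_1,\dots,Q_n$ on the non-fixed pairs exactly one equals $Q$ and the rest vanish, while $Q_{n+1},\dots,Q_{n+|S|}$ on the fixed set are all $0$. Hence $\bar Q=\sum_{i=1}^n Q_i=Q$, which is transcendental and therefore irrational; $\hat Q=\sum_{i=1}^{|S|}Q_{n+i}=0$; and the base field is $F=\Q(Q)$. Since $\hat Q$ is rational while $\bar Q=Q$ is transcendental over $\Q(\hat Q)=\Q$, the hypothesis of Lemma \ref{lem:invol_irred} on the potential holds (this is exactly what the proof of that lemma uses in its case analysis). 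It remains to verify the other hypothesis, namely that $P_+$ and $P_-$ are irreducible over $F=\Q(Q)$.

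This is the substantive step. Recall $P_+(x)=p_1(x)-Q\,q_1(x)$ with $p_1,q_1\in\Q[x]$, where $p_1$ is monic of degree $n+|S|$ and $q_1$ is monic of degree $n+|S|-1$, being characteristic polynomials of a matrix and of one of its principal submatrices. Regard $P_+$ as an element of $(\Q[x])[Q]$: it has degree one in $Q$, and its content over the UFD $\Q[x]$ equals $(p_1,q_1)=1$, so it is a primitive polynomial of degree one over a UFD and hence irreducible in $(\Q[x])[Q]$, which is the same ring as $(\Q[Q])[x]=\Q[Q,x]$. Now $P_+$ is monic in $x$ over $\Q[Q]$, hence primitive as a polynomial in $x$; by the standard consequence of Gauss's lemma, a primitive polynomial in $(\Q[Q])[x]$ that is irreducible there is irreducible over the fraction field $\Q(Q)$. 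Thus $P_+$ is irreducible over $\Q(Q)$, and the identical argument with $(p_2,q_2)=1$ shows the same for $P_-(x)=p_2(x)-Q\,q_2(x)$.

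With both hypotheses of Lemma \ref{lem:invol_irred} confirmed (the single transcendental $Q$ playing the role of the generic potential there), that lemma applies. If $S\neq\emptyset$, its first conclusion gives pretty good state transfer between $u$ and $\sigma u$ for every non-fixed vertex, in particular for our $u$. If $S=\emptyset$ but $\sigma$ fixes at least one edge (so $k\geq 1$), its second conclusion gives pretty good state transfer between $u$ and $\sigma u$ for all $u$. Hence, whenever $\sigma$ fixes at least one vertex or at least one edge, we obtain pretty good state transfer from $u$ to $u'$, which is the corollary.

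The only part demanding genuine work is the irreducibility step, and within it the point to watch is the passage from irreducibility in $\Q[Q,x]$ to irreducibility over the function field $\Q(Q)$; here it is clean precisely because $P_\pm$ is monic in $x$, so no separate content computation in the variable $x$ is required.
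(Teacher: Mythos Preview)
Your proof is correct and follows the same route as the paper: reduce to Lemma \ref{lem:invol_irred} by checking that the single transcendental value $Q$ satisfies the potential hypotheses (with $\bar Q=Q$, $\hat Q=0$, base field $\Q(Q)$), and then verify irreducibility of $P_\pm$ from the linearity in $Q$ together with $(p_i,q_i)=1$. Your treatment of the irreducibility step is in fact more careful than the paper's, which simply asserts that a factor would have to lie in $\Q[x]$; you make the Gauss-lemma passage from $\Q[Q,x]$ to $\Q(Q)[x]$ explicit and note that monicity in $x$ makes it immediate.
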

\begin{proof}
By Lemma \ref{lem:invol_irred}, since we are choosing $Q$ to be transcendental and it is the only value of the potential that we are using, the only thing we need to check is if $P_+$ and $P_-$ are irreducible.  However, as we saw above, $P_+ = p_1+Qq_1$ and $P_- = p_2 + Qq_2$.  

Note that any polynomial of the form $P(x) +Q\cdot R(X)$ will be irreducible over $\Q(Q)$ if $(P,R)=1$.  This follows since the polynomial is linear in $Q$, so if it factors, at least one factor must belong to $\Q[x]$, in which case it would have to be a factor of both $P$ and $R$. 

By assumption, $p_1$ and $q_1$ share no common factors, and so since $Q$ is transcendental, there can be no factorization of $P_+$.  Similarly for $P_-$.  This gives the result.
\end{proof}

Note that in the above, we have chosen $Q$ to be transcendental so that $P_+$ and $P_-$ will be irreducible.  It is interesting to ask if this is necessary to guarantee irreducibility. Hilbert's irreducibility theorem suggests that in fact, there are infinitely many rational values of $Q$ that would yield irreducible polynomials, but we do not know  how to describe all of them.

\section{Pretty good state transfer on paths}

We will apply the results of the previous section to paths to prove Theorem \ref{thm:PGSTpath}, which we restate here.

\begin{theorem}\label{thm:path}
Let $P_N$ denote the path on $N$ vertices and let $u$ and $v$ be the endpoints of the path.  Place a potential on $P_N$ with value $Q$ on $u$ and $v$, and 0 on all the other vertices.  Then for any $N$, there is a choice of $Q$ for which pretty good state transfer occurs between $u$ and $v$.
\end{theorem}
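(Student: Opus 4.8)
The plan is to view the path $P_N$ as a graph with the obvious reflection involution $\sigma$ swapping the two endpoints $u=v_1$ and $v=v_N$, and to apply Corollary~\ref{cor:invol_single}. There are two cases according to the parity of $N$. If $N=2n$ is even, then $\sigma$ fixes the middle edge $v_nv_{n+1}$ and has no fixed vertex, so $S=\emptyset$ and $k=1$. If $N=2n+1$ is odd, then $\sigma$ fixes the single middle vertex $v_{n+1}$, so $|S|=1$ and $k=0$. In either case $\sigma$ fixes at least one vertex or at least one edge, so Corollary~\ref{cor:invol_single} applies provided we verify its hypotheses: that $(p_1,q_1)=1$ and $(p_2,q_2)=1$, where $P_+ = p_1 + Qq_1$, $P_- = p_2 + Qq_2$, with $p_1,p_2$ the characteristic polynomials of $H_+,H_-$ at $Q=0$ and $q_1,q_2$ those of the same matrices with the row/column of $u$ deleted. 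We then take $Q$ transcendental and conclude.

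So the real content is the coprimality check, and I would carry it out by identifying $p_1,q_1,p_2,q_2$ explicitly. With $Q=0$, $H_+$ and $H_-$ are (up to the $2A_S^T$ modification along the $S$-row and a $\pm1$ on one diagonal entry coming from the fixed edge) tridiagonal matrices built from half-paths, so their characteristic polynomials are essentially Chebyshev-like and satisfy a three-term recurrence. Deleting the row and column corresponding to the endpoint $u$ from such a tridiagonal matrix leaves another tridiagonal matrix on the remaining $\deg - 1$ vertices, whose characteristic polynomial is exactly the ``previous'' polynomial in the recurrence. The key algebraic fact is the standard one that two consecutive polynomials in such a Sturm/three-term recurrence (equivalently, the characteristic polynomial of a tridiagonal matrix with nonzero off-diagonal entries and that of the same matrix with its last row/column removed) are coprime: any common root would, by the recurrence run backwards, force a lower polynomial in the sequence to vanish identically, which is impossible since the bottom of the sequence is the constant $1$. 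Concretely, a common factor of $p_i$ and $q_i$ would be a common factor of the characteristic polynomial of a tridiagonal matrix and that of a principal submatrix obtained by deleting an end vertex; by Cauchy interlacing this would force a shared eigenvalue with strictly interlacing violated, hence the corresponding eigenvector vanishes at the deleted endpoint, and then the tridiagonal eigenvalue equation propagates that zero all the way down the path, contradicting that an eigenvector is nonzero. This shows $(p_1,q_1)=1$ and $(p_2,q_2)=1$.

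The main obstacle, and the only place that needs genuine care, is bookkeeping around the fixed edge/fixed vertex: $H_+$ is not literally the adjacency matrix of a half-path but a perturbation of it (the $2A_S^T$ in the bottom block when $N$ is odd, and the $+1$ diagonal correction from the fixed edge when $N$ is even), so I need to make sure the ``tridiagonal with nonzero sub/super-diagonal, leading coefficient $1$, bottom of the recurrence equal to $1$'' structure is genuinely preserved after these modifications — it is, since these modifications touch only diagonal entries or scale a single off-diagonal entry by a nonzero factor, neither of which destroys irreducibility of the tridiagonal matrix. Once that is checked, the argument is: pick $Q$ transcendental, invoke Corollary~\ref{cor:invol_single}, and pretty good state transfer between the endpoints $u$ and $v$ follows for every $N$. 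I would also remark that $n=1$ (i.e. $N=2$, the single edge) is a trivial base case handled directly.
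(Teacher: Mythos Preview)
Your proposal is correct and follows the same high-level strategy as the paper: both reduce to Corollary~\ref{cor:invol_single} via the reflection involution of $P_N$, so the only substantive task is verifying the coprimality conditions $(p_1,q_1)=1$ and $(p_2,q_2)=1$.

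Where you diverge from the paper is in how you establish this coprimality. The paper computes $P_\pm$ explicitly in terms of the path polynomials $p_m$ (e.g.\ $P_+=(p_n-p_{n-1})-Q(p_{n-1}-p_{n-2})$ for $N=2n$), recognizes the relevant factors as divisors of $p_{2n}$ and $p_{2n-2}$ (or $p_{2n\pm1}$ in the odd case), and then separates the roots using the closed form $2\cos(k\pi/(m+1))$. Your argument is more structural: you observe that in every case $H_\pm$ at $Q=0$ is a tridiagonal matrix with all sub/super-diagonal entries nonzero (the fixed-edge $\pm1$ on the diagonal and the $2A_S^T$ entry only alter a diagonal entry or scale one off-diagonal entry, never kill it), and that deleting the endpoint row/column yields the next term in the associated three-term recurrence; a shared root then propagates down to the constant polynomial $1$, a contradiction. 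This is the standard Jacobi-matrix/Sturm-sequence fact, and it applies uniformly to both parities without tracking explicit formulas or invoking the cosine eigenvalues. The paper's route gives more concrete information about $P_\pm$ (which can be useful elsewhere), while yours is shorter, avoids the mild case analysis around the shared root $0$ of $p_{2n+1}$ and $p_{2n-1}$, and would generalize immediately to weighted paths.
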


\begin{proof}
Clearly $P_N$ has an involution, so we obtain the factorization of the characteristic polynomial into $P_+P_-$ as in Lemma \ref{lem:invol_factor}.  We wish to obtain a more specific form for $P_+$ and $P_-$.

Observe that if $N=2n$ is even, then the involution of the path has a single fixed edge, and no fixed vertices, and if $N=2n+1$ is odd, then the involution of the path has a single fixed vertex and no fixed edges.

Let $p_N(x)$ denote that characteristic polynomial of the \emph{adjacency} matrix (no potential) of $P_N$.  Examining the matrices that we obtain from Lemma \ref{lem:invol_factor} for paths, it is not hard to see that, if $N=2n$ is even then 
\begin{equation}\label{eq:even}
\begin{array}{cl}
P_+(x) &= (p_n(x) - p_{n-1}(x)) - Q(p_{n-1}(x)-p_{n-2}(x))\\
P_-(x) &= (p_n(x) + p_{n-1}(x)) - Q(p_{n-1}(x)+p_{n-2}(x))
\end{array}
\end{equation}
and if $N=2n+1$ is odd, then
\begin{equation}\label{eq:odd}
\begin{array}{cl}
P_+(x) &= (p_{n+1}(x)-p_{n-1}(x)) - Q(p_n(x)-p_{n-2}(x))\\
P_-(x) &= p_n(x) - Qp_{n-1}(x).
\end{array}
\end{equation}

Let us choose $Q$ to be any transcendental real number.
Since an even path has a fixed edge, and an odd path has a fixed vertex under its involution, then by Corollary \ref{cor:invol_single}, all we need to show is that the relevant polynomials are relatively prime.  Namely, we must show that $(p_n-p_{n-1},p_{n-1}-p_{n-2})=1$, $(p_n+p_{n-1},p_{n-1}+p_{n-2})=1$, $(p_{n+1}-p_{n-1},p_{n}-p_{n-2})=1$, and $(p_n,p_{n-1})=1$, and .

Let us first focus on $(p_n\pm p_{n-1},p_{n-1}\pm p_{n-2})$.  Note from (\ref{eq:even}) and Lemma \ref{lem:invol_factor}, if we take $Q=0$, then we see that $p_n\pm p_{n-1}$ is a factor of $p_{2n}$ and $p_{n-1}\pm p_{n-2}$ is a factor of $p_{2n-2}$.  So we will be done if we can show that $p_{2n}$ and $p_{2n-2}$ do not share any roots.  Recall that $p_{2n}$ and $p_{2n-2}$ are characteristic polynomials for the adjacency matrix of a path.  The roots of these are well known.  The roots of $p_{2n}$ are \[2\cos\left(\frac{k\pi}{2n+1}\right),~k=1,...,2n\] and the roots of $p_{2n-2}$ are \[2\cos\left(\frac{k\pi}{2n-1}\right),~k=1,...,2n-2.\] Note that these are the $x$-coordinates of $2n$ and $2n-2$ evenly spaced out points along a circle centered at the origin respectively.  Since $2n+1$ and $2n-1$ are adjacent odd numbers, it is clear that these do not coincide.  Thus, these polynomials do not share any roots, so they are relatively prime.

The same reasoning that we used at the end of the last paragraph shows that $(p_n,p_{n-1})=1$.

For $(p_{n+1}+p_{n-1},p_{n}+p_{n-2})$, we can similarly see that these are factors of $p_{2n+1} = p_n(p_{n+1}+p_{n-1})$ and $p_{2n-1} = p_{n-1}(p_n + p_{n-2})$.  Now, in this case, $p_{2n+1}$ and $p_{2n-1}$ do in fact share one root, namely $x=0$, but no other roots (by reasoning similar to above).  However, if $x$ is a root of $p_n$, then it is not a root of $p_{n-1}$, and conversely, thus $x$ can be a root of exactly one of $p_{n+1}+p_{n-1}$ and $p_{n}+p_{n-2}$.  Thus these do not share any roots, and so are relatively prime as desired. 

Thus we have showed that all the necessary polynomials are relatively prime.  Thus we get pretty good state transfer by Corollary \ref{cor:invol_single}.
\end{proof}

%\begin{remark}
%Note that we chose $Q$ transcendental to guarantee that the polynomials would be irreducible if each term in the sum was relatively prime.  This will not necessarily be the case if $Q$ is algebraic, although Hilbert's irreducibility theorem suggests that there are infinitely many rational values of $Q$ for which the polynomials will be irreducible (and infinitely many rational values for which it will not).  It is an interesting question to characterize all values of $Q$ for which we pretty good state transfer in paths.
%\end{remark}

\section{Further Directions}

It is natural to ask what happens for a graph with an involution that does not fix any vertices or edges.  This situation is much more subtle.  The field trace argument that we used in the proofs of Lemmas \ref{lem:invol_irred} and \ref{lem:one_irred} will not yield any information here, since both $P_+$ and $P_-$ have the same degree and will have identical traces.  We can reason through some special cases to obtain some partial results.  We find the following somewhat surprising since the parity of $n$ seems to play a large role in determining if pretty good state transfer can occur.

\begin{proposition}\label{prop:evenodd}
Let $G$ be a graph with an involution $\sigma$ that has no fixed vertices or edges, and let the number of vertices of $G$ be $N=2n$.  Let $Q$ be a potential on $G$ that is symmetric across $\sigma$.  Let $P_+$ and $P_-$ be the polynomials that we have had as previously.  Then we have the following.
\begin{enumerate}
\item If $n$ is even, and if $P_+$ is irreducible and the splitting fields for $P_+$ and $P_-$ intersect only in the base field, then there is pretty good state transfer between $u$ and $\sigma u$ for all $u$.
\item If $n$ is odd, and if all the eigenvectors of $H$ are non-vanishing on vertices $u,\sigma u$, the there is not pretty good state transfer between $u$ and $\sigma u$.
\end{enumerate}
\end{proposition}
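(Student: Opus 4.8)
The plan is to treat the two items separately, since they have opposite conclusions and need different tools. For item (2), the negative result, I would follow exactly the template of Theorem~\ref{thm:odd}: because $\sigma$ has no fixed vertices, the matrices $H_+$ and $H_-$ both have size $n$, so there are exactly $n$ eigenvalues $\lambda_i$ (roots of $P_+$) and $n$ eigenvalues $\mu_j$ (roots of $P_-$); and because $\sigma$ has no fixed edges, $k=0$, so $\Tr(H_+) = \Tr(H_-) = \bar Q$, i.e.\ $\sum_i \lambda_i = \sum_j \mu_j$. The hypothesis that every eigenvector of $H$ is non-vanishing on $u$ and $\sigma u$ means that \emph{all} the $\lambda_i$ and \emph{all} the $\mu_j$ are the eigenvalues appearing in condition~2 of Lemma~\ref{lem:eig} (no eigenvector gets discarded for vanishing on $u,v$). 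Now choose $\ell_i = 1$ for every $i$ and $m_j = -1$ for every $j$. Then $\sum_i \ell_i + \sum_j m_j = n - n = 0$ and $\sum_i \ell_i \lambda_i + \sum_j m_j \mu_j = \bar Q - \bar Q = 0$, yet $\sum_j m_j = -n$ is odd since $n$ is odd. By Lemma~\ref{lem:eig} there is no pretty good state transfer. This case is essentially immediate given the earlier machinery.

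For item (1), the positive result, I would run the field-trace argument of Lemma~\ref{lem:invol_irred} but patch the single point where it breaks down in the fixed-point-free case. As noted in the Further Directions discussion, when $S = \emptyset$ and $k = 0$ the trace computation alone is useless because $P_+$ and $P_-$ have the same degree $n$ and $\Tr(H_+) = \Tr(H_-) = \bar Q$, so applying $\Tr_{K/F}$ to $\sum_i \ell_i \lambda_i + \sum_j m_j \mu_j = 0$ only yields $\tfrac{\bar Q}{n}(\sum_i \ell_i + \sum_j m_j) = 0$, which is vacuous. The extra hypothesis here — $P_+$ irreducible, and the splitting fields $L$ of $P_+$ and $M$ of $P_-$ intersect only in $F$ — is precisely what is used in the \verb|\Hidden| variant at the end of the proof of Lemma~\ref{lem:invol_irred}. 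The argument is: from $\sum_i \ell_i \lambda_i = -\sum_j m_j \mu_j$, the left side lies in $L$ and the right side lies in $M$, so the common value lies in $L \cap M = F$; hence applying $\Tr_{L/F}$ gives $\Tr_{L/F}(\sum_i \ell_i \lambda_i) = [L:F]\sum_i \ell_i \lambda_i$. On the other hand, since $P_+$ is irreducible, $\Gal(L/F)$ acts transitively on its roots, so $\Tr_{L/F}(\lambda_i) = \tfrac{[L:F]}{n}\Tr(H_+) = \tfrac{[L:F]}{n}\bar Q$ for each $i$, whence $\Tr_{L/F}(\sum_i \ell_i \lambda_i) = \tfrac{[L:F]}{n}\bar Q \sum_i \ell_i$. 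Equating the two expressions gives $\sum_i \ell_i \lambda_i = \tfrac{\bar Q}{n}\sum_i \ell_i$. But $\sum_i \ell_i \lambda_i$ is an algebraic integer over $\R$ (a $\Z$-combination of algebraic integers), and it equals a rational multiple of $\bar Q$; I would use that $\bar Q$ can be taken so that $\tfrac{\bar Q}{n}\sum_i \ell_i$ is an algebraic integer only when $n \mid \sum_i \ell_i$ — here the cleanest route is to take $\bar Q$ itself to be an algebraic integer (e.g.\ choose the potential values to be algebraically independent with integral sum, or invoke Lemma~\ref{lem:irred} for irreducibility of $P_+$ with transcendental values and adjust), so that $n \mid \sum_i \ell_i$. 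Since $n$ is even, $\sum_i \ell_i$ is even, and Lemma~\ref{lem:eig} gives pretty good state transfer.

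The main obstacle is the last divisibility step in item (1): unlike the $S \neq \emptyset$ and $k \neq 0$ cases, where one directly forces $\sum_i \ell_i = 0$, here one only gets $n \mid \sum_i \ell_i$, so the conclusion genuinely relies on the parity of $n$ (which is why item (2) has the opposite answer). I would need to be careful that the chosen potential simultaneously makes $P_+$ irreducible, makes $L \cap M = F$, and makes $\bar Q \sum_i \ell_i / n$ forced to be an integer — the hypotheses of the proposition grant the first two outright, so the remaining care is only in the arithmetic of $\bar Q$, and I would state the genericity condition on $Q$ explicitly (algebraically independent transcendentals whose relevant symmetric function is an algebraic integer, or simply absorb this into "generic symmetric potential"). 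I do not expect any difficulty in verifying that all eigenvectors are of the stated form $[c\ {-c}\ 0]^T$ and $[a\ a\ 0]^T$ with $b$-block empty (since $S=\emptyset$), so strong cospectrality — condition~1 of Lemma~\ref{lem:eig} — is automatic throughout.
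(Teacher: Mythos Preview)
Your proof matches the paper's for both items, with one exception: the divisibility step that closes item~(1). You correctly reach $\sum_i\ell_i\lambda_i = \tfrac{\bar Q}{n}\sum_i\ell_i$ via the $L\cap M = F$ trick and the field trace, but your argument that this forces $n\mid\sum_i\ell_i$ does not work as written. You call the left side ``an algebraic integer over $\R$'' (which says nothing --- every real number is integral over $\R$) and then propose to take $\bar Q$ itself to be an algebraic integer while keeping the $Q_i$ algebraically independent transcendentals; these requirements are incompatible, since a sum of algebraically independent transcendentals is itself transcendental.

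The paper's fix is to shift the ring over which integrality is taken. Each $\lambda_i$ is a root of the monic polynomial $P_+$, whose coefficients lie in $\mathcal R=\Z[Q_1,\dots,Q_n]$; hence $\sum_i\ell_i\lambda_i$ is integral over $\mathcal R$, not over $\Z$. Then $\tfrac{\sum_i\ell_i}{n}\bar Q$ is an element of $F=\Q(Q_1,\dots,Q_n)$ that is integral over $\mathcal R$, and since $\mathcal R$ is a UFD (Gauss Lemma) it is integrally closed in its fraction field $F$. This forces $\tfrac{\sum_i\ell_i}{n}(Q_1+\cdots+Q_n)\in\Z[Q_1,\dots,Q_n]$, and comparing coefficients gives $n\mid\sum_i\ell_i$. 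No special arithmetic constraint on $\bar Q$ is needed beyond the algebraic independence of the $Q_i$ already implicit in the setup.
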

\begin{proof}
\begin{enumerate}
\item  Let $Q_1,...,Q_n$ be the values of the potential, and let $\bar Q = \sum_i Q_i$. Let $F$ be the base field $\Q(Q_1,...,Q_n)$, $L$ the splitting field for $P_+$ and $M$ the splitting field for $P_-$ and assume that $L\cap M= F$.  From the field trace tools used previously, we have
\[
\Tr_{L/F}\left(\sum\ell_i\lambda_i\right) = [L:F]\frac{\bar Q}{n}\sum\ell_i.
\]
On the other hand, under the assumption that $\sum\ell_i\lambda_i + \sum m_i \mu_i = 0$, we see that $\sum \ell_i\lambda_i  = -\sum m_i\mu_i\in M$.  So $\sum \ell_i\lambda_i \in L\cap M = F$.  Therefore, we know that
\[
\Tr_{L/F}\left(\sum\ell_i\lambda_i\right) = [L:F]\sum\ell_i\lambda_i.
\]
Thus, we can conclude that 
\[
\sum \ell_i\lambda_i = \frac{\sum\ell_i}{n}\bar Q.
\]
But clearly $\sum \ell_i\lambda_i$ is an integral element over the ring $\mathcal{R} = \Z[Q_1,...,Q_n]$, therefore $\bar Q\sum\ell_i/n$ is as well.  This implies, via the Gauss Lemma, that $\sum\ell_i$ is divisible by $n$, which is an even number.  So $\sum\ell_i$ is even, which implies $\sum m_i$ is even, so we get pretty good state transfer by Lemma \ref{lem:eig}.

\item Since there are no fixed edges, then we have $\Tr(H_+) = \Tr(H_-) = \sum Q_i$, and since there are no fixed vertices, and no eigenvectors vanish on symmetric pairs of nodes, then the number of $\lambda_i$'s is the same as the number of $\mu_i$'s.  Thus, we can simply choose $\ell_i = 1$ for all $i$, and $\mu_i = -1$ for all $i$.  Then clearly we have $\sum\ell_i + \sum m_i = 0$, and
\[
\sum \ell_i\lambda_i + \sum m_i\mu_i = \sum Q_i - \sum Q_i = 0,
\]
but $\sum m_i$ and $\sum\ell_i $ are both odd, so we do not get pretty good state transfer, by Lemma \ref{lem:eig}.
\end{enumerate}
\end{proof}

We will give an example to illustrate some interesting aspects of Corollary \ref{cor:invol_single} and Proposition \ref{prop:evenodd}.

\begin{example}\label{ex:C6}
Let $G=C_6$, the cycle on six vertices.  Label the vertices $v_1,...,v_6$ cyclically around the cycle.  The question we wish to address is whether there is pretty good state transfer between $v_2$ and $v_5$.  Let us consider two involutions of $C_6$: first, let $\sigma$ be the map which interchanges $v_1$ with $v_6$, $v_2$ with $v_5$, and $v_3$ wtih $v_4$, and let $\sigma'$ be the map which interchanges $v_1$ with $v_4$, $v_2$ with $v_5$ and $v_3$ with $v_6$.  The involutions $\sigma$ and $\sigma'$ are, respectively, reflections across the vertical axis in the two different drawings of $C_6$ shown in Figure \ref{fig:c6}.  Note that $\sigma$ fixes two edges, $v_1v_6$ and $v_3v_6$, and $\sigma'$ does not fix any edges or vertices.  Notice also that both involutions have an odd number of vertices on either side.  Since both $\sigma$ and $\sigma'$ interchange $v_2$ with $v_5$, it would appear that the result of Proposition \ref{prop:evenodd} is at odds with the results of Theorem \ref{thm:PGSTinvol} and Corollary \ref{cor:invol_single}.  However, this is not the case.

First, notice that in Theorem \ref{thm:PGSTinvol}, the potential that is guaranteed by Lemma \ref{lem:irred} requires the potential to be non-zero everywhere, and symmetric across the involution.  Therefore the potential does encode the information of which involution we are looking at, that is, it ``breaks" the symmetry of the other involution.  Thus, our results say that if the potential is chosen to be symmetric across $\sigma$, then there is a choice of potential that gives pretty good state transfer from $v_2$ to $v_5$ (this potential will not be symmetric across $\sigma'$).  However, if the potential is chosen to be symmetric across $\sigma'$, then if the eigenvectors are all non-zero, then pretty good state transfer does not occur.  

Now let us consider this graph with a single value of $Q$ placed on $v_2$ and $v_5$ (this is symmetric with respect to both $\sigma$ and $\sigma'$).  In this case, both $P_+$ and $P_-$ factor, and there are eigenvectors that are 0 on $v_2$ and $v_5$, so none of our results are applicable.  We can look at the spectrum directly however. The characteristic polynomial in this case is.
\[
\left(x^2 - (Q+1)x+Q-2\right)\left(x^2-(Q-1)x-(Q+2)\right)(x-1)(x+1).
\]
The eigenvectors for $-1$ and $1$ are, respectively, $[-1,0,1,-1,0,1]^T$ and $[1,0,-1,-1,0,1]^T$, so $\pm1$ both are eigenvalues for which the entry corresponding to $v_2$ and $v_5$ are 0.  Thus, they do not need to be taken into account when considering the linear combinations from Lemma \ref{lem:eig}.  Thus, we have
\[
\lambda_1,\lambda_2 = \frac12\left(Q+1\pm\sqrt{Q^2-2Q+9}\right)~~~~\mu_1,\mu_2 = \frac12\left(Q-1\pm\sqrt{Q^2+2Q+9}\right)
\]
For the condition of Lemma \ref{lem:eig} to hold, we need to examine the system 
\begin{align*}
\ell_1\lambda_1 + \ell_2\lambda_2+m_1\mu_1+m_2\mu_2 &=0
\\
\ell_1+\ell_2+m_1+m_2 &=0.
\end{align*}
It is not hard to see that for generic $Q$, the only solution to this system is $\ell_1=\ell_2=m_1=m_2=0$, and thus we get pretty good state transfer by Lemma \ref{lem:eig}.  Note also that, if we had included $1$ and $-1$ in the linear combinations, we could have obtained solutions where $\sum\ell_i$ and $\sum m_j$ are odd.  Thus, it is important to only consider those eigenvectors that are non-vanishing on the vertices in question.  Furthermore, this example shows that there can be cases where the polynomials $P_+$ and $P_-$ factor, but we still get pretty good state transfer.

\begin{figure}
\begin{center}
\begin{tikzpicture}
\draw (-2,0)node{}--(-1,1)node{}--(1,1)node{}--(2,0)node{}--(1,-1)node{}--(-1,-1)node{}--(-2,0);
\draw (-1,1)node[left,fill=none]{$v_1$} (-2,0)node[left,fill=none]{$v_2$} (-1,-1)node[left,fill=none]{$v_3$} (1,-1)node[right,fill=none]{$v_4$} (2,0)node[right,fill=none]{$v_5$} (1,1)node[right,fill=none]{$v_6$};
\end{tikzpicture}~~~~
\begin{tikzpicture}
\draw (-2,0)node{}--(-1,1)node{}--(1,-1)node{}--(2,0)node{}--(1,1)node{}--(-1,-1)node{}--(-2,0);
\draw (-1,1)node[left,fill=none]{$v_1$} (-2,0)node[left,fill=none]{$v_2$} (-1,-1)node[left,fill=none]{$v_3$} (1,1)node[right,fill=none]{$v_4$} (2,0)node[right,fill=none]{$v_5$} (1,-1)node[right,fill=none]{$v_6$};
\end{tikzpicture}
\end{center}
\caption{Two involutions of $C_6$}\label{fig:c6}
\end{figure}
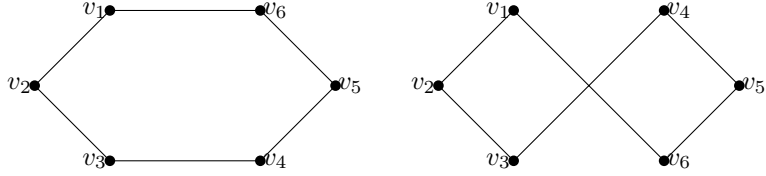

\end{example}

We end with a discussion of a few avenues of research that are still open.
\begin{enumerate}
\item \emph{Can we complete any cases missing from Proposition \ref{prop:evenodd}?}  The condition that the splitting fields for $P_+$ and $P_-$ have trivial overlap seems very unnatural, but is essential in the proof.  Furthermore, it is unclear how things work if we have eigenvectors that vanish on the vertices in question.  It would be of interest to determine if the potential can always be chosen to force the eigenvectors to be non-vanishing everywhere.  

\item \emph{Can we say anything when neither $P_+$ nor $P_-$ is irreducible?}  Corollary \ref{cor:invol_single} requires irreducibility as a condition, and Theorem \ref{thm:PGSTinvol} was proven by showing the potential can be chosen so that  $P_+$ is irreducible.  However, neither of these says that we do not get pretty good state transfer if $P_+$ does factor further.  See, for instance, Example \ref{ex:C6}.  It is of interest to investigate further what happens in general.

\item \emph{Can we get pretty good state transfer without an involution?}  We studied graphs with involutions because they automatically give rise to pairs of vertices that are strongly cospectral.  However, examples of strongly cospectral pairs are known that do not come from an involution.  Work on such examples does seem to indicate that we can still obtain a factorization of the characteristic polynomial, but the involution gave us a natural way to understand this factorization.  It is of interest to see what can be done in the absence of this symmetry.

\item \emph{When pretty good state transfer occurs, can we say anything about the time it takes to for the state transfer between two vertices to get within $\epsilon$ of perfect?}  Lemma \ref{lem:eig} gives conditions that guarantee the \emph{existence} of a $t$ for which we are within $\epsilon$, but we do not know how to find such a $t$.  Especially in the case where the potential takes a single non-zero value on a pair of nodes, it is of interest to determine if there is any interaction between the size of the potential and the time to achieve the maximum state transfer.
\end{enumerate}

%----------------------------------------------
%\begin{thebibliography}{}
%\bibitem{PGST}
%L. Banchi, G. Coutinho, C. Godsil, and S. Severini, Pretty good state transfer in qubit chains--the Heisenberg Hamiltonian, %available at arxiv.org/pdf/1608.04722v1.pdf

%\bibitem{us}
%M. Kempton, G. Lippner, S.-T. Yau, Perfect state transfer on graphs with a potential, available at arxiv.org/pdf/1611.02093v2.pdf

%\end{thebibliography}

\bibliographystyle{hplain}
\bibliography{quantum}

\end{document}